\renewcommand{\dim}{\mathrm{dim}}
\newcommand{\Sing}{{\rm{Sing}}}
\newcommand{\codim}{{\rm{codim}}}
\newcommand{\kerr}{{\rm{Ker}}}
\newtheorem{nada}{Nada}[section]
\newtheorem{definition}[nada]{Definition}
\newtheorem{corollary}[nada]{Corollary}
\newtheorem{theorem}[nada]{Theorem}
\newtheorem*{thm*}{Theorem}
\newtheorem{lemma}[nada]{Lemma}
\newtheorem{rmk}[nada]{Remark}
\newtheorem{example}[nada]{Example}
\newcommand{\bc}{\begin{center}}
\newcommand{\ec}{\end{center}}
\newcommand{\noi}{\noindent}
\newcommand{\rank}{{\rm{rank}}}
\theoremstyle{plain}
\begin{document}

\title{Inequalities and enumerative formulas for
flags of Pfaff systems}
\hyphenation{ho-mo-lo-gi-cal}
\hyphenation{fo-lia-tion}

\begin{abstract}

In this work, we study inequalities and enumerative formulas for flags of Pfaff Systems on $\mathbb{P}^n_{\mathbb{C}}$.
More specifically, we establish a bound for the number of independent twisted $r$-forms that leave invariant a one-dimensional holomorphic foliation and deduce inequalities that relate the degrees in the flags, which can be interpreted as a version of the Poincar\'e problem for flags. 
Moreover, by restricting to a flag of specific holomorphic foliations/distributions, we obtain inequalities involving the degrees. 
As a consequence, we obtain stability results for the tangent sheaf of some rank two holomorphic foliations/distributions.
\end{abstract}

\author{Miguel Rodr\'iguez Pe\~na}
\address{Arnulfo Miguel Rodr\'iguez Pe\~na \\ ICEX-UFMG, Departamento de Matem\'atica,
Belo Horizonte MG, Brazil, CEP 31270-901. \href{https://orcid.org/0009-0005-0837-4466}{ORCID ID: 0009-0005-0837-4466}.}
\email{amrp2024@ufmg.br}

\author{Fernando Louren\c co}
\address{Fernando Louren\c co \\ DMM-UFLA, Campus Universit\'ario, 
Lavras MG, Brazil, CEP 37200-000. \href{https://orcid.org/0009-0003-1837-5417}{ORCID ID: 0009-0003-1837-5417}.}
\email{fernando.lourenco@ufla.br}
\thanks{ }

\maketitle

\section{Introduction}
  
The concept of a flag of holomorphic foliations is a relatively recent topic, and many authors have studied it; see, for instance,  
\cite{BCL,SoaCo,Fei}, and \cite{Mol}. By a {\it flag} of holomorphic distributions on a complex manifold $M$, 
$$\mathcal{D}_{k_1}\prec\mathcal{D}_{k_{2}}\prec\cdots\prec\mathcal{D}_{k_m},$$ 
we mean a collection of distributions of dimensions
$$1 \leq k_1 < k_2 < \cdots < k_m< \mathrm{dim}(M),$$ 
such that, at each point $p\in M$ where the distributions are regular, 
$$\mathcal{D}_{k_i,\,p}\subset\mathcal{D}_{k_j,\,p}\,\,\,\mbox{whenever}\,\,\,i<j.$$ 
In particular, under the assumption of integrability,
this implies that the leaves of $\mathcal{D}_{k_i}$ are contained in the leaves of $\mathcal{D}_{k_j}$. 
All these concepts are explained in Section \ref{Pre}.

In this paper, we extend the concept of a flag of holomorphic foliations/distributions to that of a flag of Pfaff systems. 
Using this new approach, we study inequalities and enumerative problems associated with such flags. The inequalities presented 
in this work are motivated by the so-called Poincar\'e problem for foliations. 

Before stating the results, we recall that a {\it Pfaff system} $\mathscr{F}$ on a complex projective space $\mathbb{P}^n$ is defined by a nontrivial global section 
$$\omega_{\mathscr{F}} \in H^{0}(\mathbb{P}^{n}, \Omega_{\mathbb{P}^{n}}^{k} \otimes \mathcal{O}_{\mathbb{P}^{n}}(l)),$$ 
where $\Omega_{\mathbb{P}^{n}}^{k}$ denotes the sheaf of holomorphic $k$-forms on $\mathbb{P}^n$. The number $k$, with $1\leq k\leq n-1$,
is called the codimension of $\mathscr{F}$. A {\it holomorphic distribution} is a twisted $r$-form that is locally decomposable outside its singular set, and a {\it holomorphic foliation} is a distribution that is integrable in the sense of Frobenius.


Motivated by the question of algebraic integrability of foliations, Henri Poincar\'e studied in \cite{Poin} the problem of determining whether a holomorphic foliation $\mathcal{F}$ on the complex projective plane admits a rational first integral. Poincar\'e observed that, 
to address this problem, it suffices to find a bound for the degree of the generic $\mathcal{F}$-invariants curves.
The question of bounding the degree of an algebraic variety invariant under a foliation on projective space, in terms of the degree of the foliation, is known as the {\it Poincar\'e problem}.

On the other hand, the Poincar\'e problem can be interpreted in terms of flags. Specifically, if we consider a flag of holomorphic foliations on projective space $\mathbb{P}^{n}$, to what extent can the degrees of the foliations comprising the flag be related?
Several topics closely related to flags arise naturally in the theory of holomorphic foliations.



Next, we present some of the main results of this work, see also Section \ref{Ineq}.
The following theorem provides a count of the number of independent twisted $r$-forms that leave invariant a one-dimensional holomorphic foliation.

%
%
%
%
%
%
%

\begin{theorem}\label{thm 2} Let $\mathcal{F}$ be a one-dimensional holomorphic foliation of degree $d$ on $\mathbb{P}^{n}$ whose singular set consists only of isolated singularities. Then, for each integer number $m$, there exist
$\wp(d,m,r)$ independent twisted $r$-forms of degree $m$ that leave $\mathcal{F}$ invariant, where
$$\footnotesize\wp(d,m,r)=\left\{\begin{array}{ccccc}

0 & if & m+1\leq d \\ \\

\displaystyle\sum_{i=1}^{j}(-1)^{i+1}\binom{m+n+1-id}{m+r+i+1-id} \binom{m+r+i-id}{r+i}  & if &  \begin{array}{cc}
                              \mbox{} \\
                              jd< m+1 \leq (j+1)d,  \\ 
                              j=1,\ldots,n-r-2
                              \end{array} \\ \\

\displaystyle\sum_{i=1}^{n-r}(-1)^{i+1}\binom{m+n+1-id}{m+r+i+1-id} \binom{m+r+i-id}{r+i} & if &  (n-r-1)d<m+1.

\end{array}\right.$$

\end{theorem}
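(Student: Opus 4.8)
The plan is to reinterpret the counting problem as the dimension of the kernel of a contraction map, and then to resolve that kernel by a Koszul complex. First I would fix the homogeneous description: a one-dimensional foliation $\mathcal{F}$ of degree $d$ is given by a global section $X \in H^{0}(\mathbb{P}^{n}, T_{\mathbb{P}^{n}}(d-1))$ whose zero scheme is $\mathrm{Sing}(\mathcal{F})$, while a twisted $r$-form of degree $m$ is a section $\omega \in H^{0}(\mathbb{P}^{n}, \Omega^{r}_{\mathbb{P}^{n}}(m+r+1))$. With these conventions $\omega$ leaves $\mathcal{F}$ invariant precisely when $\iota_{X}\omega = 0$, where $\iota_{X}\colon \Omega^{r}_{\mathbb{P}^{n}}(m+r+1) \to \Omega^{r-1}_{\mathbb{P}^{n}}(m+r+d)$ is interior multiplication. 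Hence
\[
\wp(d,m,r) = \dim \ker\!\big(\iota_{X}\colon H^{0}(\Omega^{r}_{\mathbb{P}^{n}}(m+r+1)) \to H^{0}(\Omega^{r-1}_{\mathbb{P}^{n}}(m+r+d))\big),
\]
so the whole statement becomes the computation of this single dimension.

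Next I would introduce the Koszul complex attached to the section $X$ of the rank-$n$ bundle $E := T_{\mathbb{P}^{n}}(d-1)$, namely
\[
0 \to \textstyle\bigwedge^{n} E^{\vee} \xrightarrow{\iota_{X}} \bigwedge^{n-1} E^{\vee} \xrightarrow{\iota_{X}} \cdots \xrightarrow{\iota_{X}} E^{\vee} \xrightarrow{\iota_{X}} \mathcal{O}_{\mathbb{P}^{n}} \to 0 .
\]
Since $\mathrm{Sing}(\mathcal{F})$ consists only of isolated points, $Z(X)$ has codimension $n = \mathrm{rank}(E)$, so $X$ is a regular section and this complex is exact, resolving $\mathcal{O}_{Z(X)}$. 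Using $\bigwedge^{p} E^{\vee} \cong \Omega^{p}_{\mathbb{P}^{n}}((1-d)p)$ and twisting the whole complex by $\mathcal{O}_{\mathbb{P}^{n}}(m+r+1+r(d-1))$, the term in homological degree $p$ becomes $\Omega^{p}_{\mathbb{P}^{n}}(m+r+1-(p-r)(d-1))$, so that the spot $p=r$ is exactly $\Omega^{r}_{\mathbb{P}^{n}}(m+r+1)$. Writing $\mathcal{Z}_{r} := \ker(\iota_{X}\colon \bigwedge^{r}E^{\vee} \to \bigwedge^{r-1}E^{\vee})$, the invariant forms are the global sections of its twist, and the exact tail $0 \to \bigwedge^{n}E^{\vee} \to \cdots \to \bigwedge^{r+1}E^{\vee} \to \mathcal{Z}_{r} \to 0$ furnishes a locally free resolution of $\mathcal{Z}_{r}$ by the bundles $\Omega^{r+i}_{\mathbb{P}^{n}}(m+r+1-i(d-1))$, for $i = 1,\dots,n-r$.

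I would then compute the required $h^{0}$ by splitting this resolution into short exact sequences, running the associated long exact cohomology sequences, and reducing everything to Bott's formula for $H^{\bullet}(\mathbb{P}^{n},\Omega^{p}_{\mathbb{P}^{n}}(k))$. A direct application of Bott gives $h^{0}(\Omega^{r+i}_{\mathbb{P}^{n}}(m+r+1-i(d-1))) = \binom{m+r+i-id}{r+i}\binom{m+n+1-id}{m+r+i+1-id}$, which is nonzero exactly when $id < m+1$. This vanishing threshold is what produces the three regimes in the statement: the alternating sum naturally runs to $i=n-r$, but every term with $id \ge m+1$ vanishes, so it truncates at $i=j$ whenever $jd < m+1 \le (j+1)d$ with $j \le n-r-2$, all terms vanish when $m+1\le d$, and no truncation survives once $(n-r-1)d < m+1$. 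The main obstacle, and the step requiring the most care, is justifying that the alternating sum of these $h^{0}$'s actually equals $h^{0}(\mathcal{Z}_{r}(\text{twist}))$: this demands checking, once more via Bott's formula, that the intermediate and top cohomology groups $H^{q}(\Omega^{r+i}_{\mathbb{P}^{n}}(m+r+1-i(d-1)))$ with $q>0$ — and hence those of the successive syzygy sheaves — vanish in the relevant range, so that no higher-cohomology corrections leak into the count. The borderline values of $m$ at which a Bott group becomes nonzero must be inspected separately, and they are precisely what delineates the case distinctions above.
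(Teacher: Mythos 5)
Your proposal is correct and takes essentially the same route as the paper's own proof: the Koszul complex of the section $X$ of $T_{\mathbb{P}^n}(d-1)$, twisted by $\mathcal{O}_{\mathbb{P}^n}(m+rd+1)$ so that the $r$-th term is $\Omega^{r}_{\mathbb{P}^n}(m+r+1)$, then broken into short exact sequences whose cohomology is controlled by Bott's formula, yielding the alternating sum of $h^0$'s with truncation at the threshold $id<m+1$. The technical step you single out --- that the positive-degree cohomology of the successive syzygy sheaves vanishes, so no corrections enter the count --- is exactly the paper's Lemma 3.1, proved there by the same Bott-vanishing chase of inclusions up the complex that you describe.
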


In the following corollary, we investigate the relationship between the degrees of the Pfaff systems involved in the flag

\begin{corollary}\label{thm 1} Let $\mathcal{F}\prec\mathcal{G}$ be a flag on $\mathbb{P}^{n}$, where $\mathcal{F}$ is a one-dimensional holomorphic foliation which its singular set has only isolated singularities, and $\mathcal{G}$ is a Pfaff system. Then
$$\deg (\mathcal{F}) \leq \deg (\mathcal{G}).$$
\end{corollary}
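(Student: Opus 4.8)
The plan is to read off the inequality directly from Theorem \ref{thm 2} by a contrapositive argument based on the vanishing regime of $\wp(d,m,r)$. First I would unwind what the flag $\mathcal{F}\prec\mathcal{G}$ means in terms of forms. Since $\mathcal{F}$ is a one-dimensional foliation, away from its (isolated) singular set it is generated by a vector field $X$, while $\mathcal{G}$ is a Pfaff system of some codimension $r$ given by a nontrivial section $\omega_{\mathcal{G}}\in H^0(\mathbb{P}^n,\Omega^r_{\mathbb{P}^n}(l))$. Following the definition of a flag of Pfaff systems recalled in Section \ref{Pre}, the relation $\mathcal{F}\prec\mathcal{G}$ is exactly the statement that $\omega_{\mathcal{G}}$ leaves $\mathcal{F}$ invariant (equivalently $T_{\mathcal{F}}\subset T_{\mathcal{G}}$ at regular points, i.e. $\iota_X\omega_{\mathcal{G}}=0$ for a local generator $X$). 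Making this identification explicit shows that $\omega_{\mathcal{G}}$ is precisely one of the twisted $r$-forms counted by Theorem \ref{thm 2}.

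Writing $m=\deg(\mathcal{G})$ for the degree of the Pfaff system — with the twist $l$ and the degree $m$ related in the standard way for a codimension-$r$ system, as normalized in the preliminaries — the form $\omega_{\mathcal{G}}$ is a \emph{nonzero} invariant twisted $r$-form of degree $m$, simply because a Pfaff system is by definition a nontrivial global section. Hence the space of degree-$m$ twisted $r$-forms that leave $\mathcal{F}$ invariant is nonzero, so its dimension satisfies $\wp(d,m,r)\geq 1$. The inequality to be proved is then just $d\leq m$.

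Finally I would invoke the first branch of Theorem \ref{thm 2}, namely $\wp(d,m,r)=0$ whenever $m+1\leq d$. Taking the contrapositive, the strict positivity $\wp(d,m,r)\geq 1$ forces $m+1>d$, that is $m\geq d$, which is exactly $\deg(\mathcal{F})\leq\deg(\mathcal{G})$. The computation is immediate once the setup is fixed; the only point requiring care — and the main obstacle — is the bookkeeping of the first two paragraphs, namely confirming that the flag relation $\mathcal{F}\prec\mathcal{G}$ coincides verbatim with the invariance hypothesis of Theorem \ref{thm 2}, and that the degree parameter $m$ of the counted forms agrees with $\deg(\mathcal{G})$ under the paper's conventions. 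No nonvanishing or general-position subtleties intervene, since only $\wp(d,m,r)>0$ is needed rather than its precise value.
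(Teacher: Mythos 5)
Your proposal is correct and is exactly the derivation the paper intends: the flag condition $i_X\omega_{\mathcal{G}}=0$ exhibits $\omega_{\mathcal{G}}$ as a nonzero invariant twisted $r$-form of degree $m=\deg(\mathcal{G})$, so $\wp(d,m,r)\geq 1$, and the vanishing branch of Theorem \ref{thm 2} ($\wp=0$ when $m+1\leq d$) forces $d\leq m$. Your care about matching conventions is well placed but unproblematic, since the paper's definition of such a flag is literally $i_X\omega=0$ with $1\leq k<n-1$, and the forms counted in the proof of Theorem \ref{thm 2} are sections of $\Omega^{r}_{\mathbb{P}^n}(m+r+1)$, i.e.\ of Pfaff-system degree $m$.
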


This result can also be derived from Bott's formula. Indeed, any twisted $r$-form $\omega \in H^{0}(\mathbb{P}^{n}, \Omega_{\mathbb{P}^{n}}^{r}(r+1+\deg(\omega)))$ such that $i_{X}\omega=0$ for some vector field $X \in H^{0}(\mathbb{P}^{n}, T_{\mathbb{P}^{n}}(d-1))$ of degree $d$, can be write as $\omega=i_{X}\theta$, where $\theta \in H^{0}(\mathbb{P}^{n}, \Omega_{\mathbb{P}^{n}}^{r+1}(r+2+\deg(\theta)))$. 
In this case, the degrees satisfy the relation $\deg(\omega)=\deg(\theta)+d$.


Note that Theorem \ref{thm 2} gives a finitely many flags which is reaches the maximal bound in Corollary \ref{thm 1}, proving that this bound is sharp (see also Example \ref{Ex2}).

\begin{corollary}\label{coro001} Let $\mathcal{D}$ be a holomorphic distribution such that 
$T_\mathcal{D}\cong\bigoplus T_{\mathcal{F}_i}$, where each $\mathcal{F}_i$ 
is a one-dimensional holomorphic foliation which its singular set has only isolated singularities. 
Let $\mathcal{G}$ be a Pfaff system such that leaves $\mathcal{D}$ invariant. Then 
$$\deg(\mathcal{D})\leq\mathrm{dim}(\mathcal{D})\cdot\deg(\mathcal{G}).$$	
\end{corollary}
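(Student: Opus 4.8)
The plan is to reduce the statement to the one-dimensional situation already settled in Corollary \ref{thm 1}, by exploiting the splitting $T_\mathcal{D}\cong\bigoplus_{i=1}^{k}T_{\mathcal{F}_i}$ with $k=\dim(\mathcal{D})$, and then to add up the resulting inequalities. Concretely, I need two ingredients: (i) that the degree of $\mathcal{D}$ equals the sum of the degrees of the $\mathcal{F}_i$; and (ii) that the invariance of $\mathcal{D}$ under $\mathcal{G}$ descends to the invariance of each $\mathcal{F}_i$ under $\mathcal{G}$. Once these are in place, the estimate is purely formal.

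For (i), I would recall that a one-dimensional foliation $\mathcal{F}_i$ of degree $d_i$ has tangent sheaf $T_{\mathcal{F}_i}\cong\mathcal{O}_{\mathbb{P}^n}(1-d_i)$, so that $c_1(T_{\mathcal{F}_i})=(1-d_i)h$, where $h$ is the hyperplane class. Combining $c_1(T_\mathcal{D})=\sum_{i=1}^{k}c_1(T_{\mathcal{F}_i})$, which comes from the direct-sum decomposition, with the normalization $c_1(T_\mathcal{D})=(\dim(\mathcal{D})-\deg(\mathcal{D}))h$ valid for a dimension-$k$ distribution, I would obtain
$$\deg(\mathcal{D})=k-\sum_{i=1}^{k}(1-d_i)=\sum_{i=1}^{k}d_i=\sum_{i=1}^{k}\deg(\mathcal{F}_i).$$

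For (ii), the summand inclusions $T_{\mathcal{F}_i}\hookrightarrow T_\mathcal{D}\hookrightarrow T_{\mathbb{P}^n}$ exhibit each $\mathcal{F}_i$ as a subfoliation of $\mathcal{D}$. Writing $\mathcal{G}$ via its defining twisted form $\omega_\mathcal{G}$, the hypothesis that $\mathcal{G}$ leaves $\mathcal{D}$ invariant means $i_X\omega_\mathcal{G}=0$ for every local section $X$ of $T_\mathcal{D}$; restricting to the summand $T_{\mathcal{F}_i}$ gives $i_{X_i}\omega_\mathcal{G}=0$ for the vector field $X_i$ generating $\mathcal{F}_i$, i.e. $\mathcal{F}_i\prec\mathcal{G}$ is a flag. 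Since each $\mathcal{F}_i$ is a one-dimensional holomorphic foliation with only isolated singularities and $\mathcal{G}$ is a Pfaff system, Corollary \ref{thm 1} applies summand by summand and yields $\deg(\mathcal{F}_i)\leq\deg(\mathcal{G})$ for every $i=1,\dots,k$. Summing,
$$\deg(\mathcal{D})=\sum_{i=1}^{k}\deg(\mathcal{F}_i)\leq\sum_{i=1}^{k}\deg(\mathcal{G})=k\cdot\deg(\mathcal{G})=\dim(\mathcal{D})\cdot\deg(\mathcal{G}).$$

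The only genuinely delicate point is the additivity of the degree under the splitting of the tangent sheaf in step (i): one must make sure the Chern-class normalization used for $\mathcal{D}$ is consistent with the convention under which a one-dimensional foliation of degree $d$ has $T_\mathcal{F}\cong\mathcal{O}_{\mathbb{P}^n}(1-d)$. Once the conventions are aligned the additivity is immediate, and step (ii) together with Corollary \ref{thm 1} finishes the argument without further computation.
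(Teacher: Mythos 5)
Your proof is correct and follows exactly the derivation the paper intends for this corollary (which it leaves implicit): the splitting $T_\mathcal{D}\cong\bigoplus T_{\mathcal{F}_i}$ gives degree additivity $\deg(\mathcal{D})=\sum\deg(\mathcal{F}_i)$ --- the same fact the paper invokes in the proof of Theorem \ref{D} --- and invariance restricts to each summand, so Corollary \ref{thm 1} applies term by term. Your Chern-class normalization $c_1(T_\mathcal{D})=(\dim(\mathcal{D})-\deg(\mathcal{D}))h$ agrees with the paper's convention in Section \ref{stab}, so the additivity step is sound.
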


In \cite[Theorem 2]{CuPe} (see also Theorem $1$), the authors show that the split condition of $T_\mathcal{D}$ 
in Corollary \ref{coro001} is an open condition for $n\geq 4$, $\mathrm{dim}(\mathcal{D})\leq n-2$, and 
$\mathrm{codim}\hspace{-0.2mm}\big(\mathrm{Sing}(\mathcal{D})\big)\geq 3$.

\begin{rmk} We observe that a holomorphic distribution $\mathcal{D}$ on $\mathbb{P}^{n}$ is nonsingular if and only if $n$ is odd, 
$\codim(\mathcal{D})=1$ and $\deg(\mathcal{D})=0$, see \cite[p.36]{Coo} and \cite{GHS}. In this work, we will assume that all distributions
have a nonempty singular set.
\end{rmk}

In the next result, we consider a dual version of the above result in Theorem \ref{thm 2}. Namely, assuming a fixed codimension one distribution, we count-under generic conditions-the number of tangent, linearly independent vector fields.

\begin{theorem}\label{thm 3} Let $\mathcal{D}$ be a holomorphic distribution of codimension one and degree $m$ on $\mathbb{P}^{n}$ which its singular set has only isolated singularities. Then for each integer number $d$ subject to conditions

\begin{enumerate} 
\item[(1)] $m+1\leq d < 2\left(m+1\right)$,
\item[(2)] $d \neq \big(\frac{n}{2}\big)m$ if $n$ is even,
\item[(3)] $d \neq \left(\frac{n-1}{2}\right)m-1$ and 
$d \neq \left(\frac{n+1}{2}\right)m+1$ if $n$ is odd,
\end{enumerate}
\noindent there are
$$\binom{d-m+n}{d-m+n-2}\binom{d-m+n-3}{n-2}$$
\vskip 0.1cm
\noindent independent vector fields $X$ of degree $d$ on $\mathbb{P}^{n}$ invariants by $\mathcal{D}$. Moreover, if $d < m+1$, we do not have nontrivial vector field $X$ of degree $d$ invariant by $\mathcal{D}$ in the case $n$ even, or 
such that $d \neq \left(\frac{n-1}{2}\right)m-1$ in the case $n$ odd.
\end{theorem}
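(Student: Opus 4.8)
The plan is to reduce the statement to a single cohomology computation and then read it off from the Koszul complex of the $1$-form defining $\mathcal{D}$. Write $\mathcal{D}$ via $\omega\in H^{0}(\mathbb{P}^{n},\Omega^{1}_{\mathbb{P}^{n}}(m+2))$ and set $Z=\mathrm{Sing}(\mathcal{D})$, so that $T_{\mathcal{D}}=\ker\!\big(T_{\mathbb{P}^{n}}\xrightarrow{\ \omega\ }\mathcal{I}_{Z}(m+2)\big)$. A degree-$d$ vector field tangent to $\mathcal{D}$ is exactly an element of $H^{0}(T_{\mathbb{P}^{n}}(d-1))$ lying in the kernel of contraction with $\omega$, i.e. a global section of $T_{\mathcal{D}}(d-1)$. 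Thus the first step is the identification
\[
\#\{\text{tangent fields of degree }d\}=h^{0}\big(\mathbb{P}^{n},T_{\mathcal{D}}(d-1)\big)=\dim\ker\!\big(H^{0}(T_{\mathbb{P}^{n}}(d-1))\to H^{0}(\mathcal{I}_{Z}(m+1+d))\big).
\]

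Next I would build a locally free resolution of $T_{\mathcal{D}}$. Regarding $\omega$ as a section of the rank-$n$ bundle $\Omega^{1}_{\mathbb{P}^{n}}(m+2)$, the hypothesis that $\mathcal{D}$ has only isolated singularities says precisely that $Z$ has the expected codimension $n$, so the associated contraction (Koszul) complex is exact and resolves $\mathcal{O}_{Z}$. Splitting off its first two terms and using $\bigwedge^{p}T_{\mathbb{P}^{n}}\cong\Omega^{n-p}_{\mathbb{P}^{n}}(n+1)$ converts it into an exact sequence
\[
0\to\Omega^{0}_{\mathbb{P}^{n}}(t_{n})\to\cdots\to\Omega^{n-3}_{\mathbb{P}^{n}}(t_{3})\to\Omega^{n-2}_{\mathbb{P}^{n}}(t_{2})\to T_{\mathcal{D}}(d-1)\to 0,\qquad t_{p}=n+d-(p-1)(m+2).
\]
Bott's formula then gives $h^{0}\big(\Omega^{n-2}_{\mathbb{P}^{n}}(t_{2})\big)=\binom{d-m+n}{2}\binom{d-m+n-3}{n-2}$, which is the asserted count; here the lower bound $d\ge m+1$ in (1) is exactly what makes this nonzero, while $d<m+1$ forces $t_{2}\le n-2$ and hence $h^{0}=0$, which will yield the ``moreover'' part. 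It therefore remains to prove the clean identification $H^{0}(T_{\mathcal{D}}(d-1))\cong H^{0}(\Omega^{n-2}_{\mathbb{P}^{n}}(t_{2}))$.

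To get this I would chase the resolution through its short exact sequences (equivalently, run the hypercohomology spectral sequence), whose $E_{1}$ terms are the groups $H^{q}(\Omega^{n-p}_{\mathbb{P}^{n}}(t_{p}))$. The identification holds once every group that sits on the relevant diagonal for $H^{0}$ or feeds a differential into $H^{0}(\Omega^{n-2}_{\mathbb{P}^{n}}(t_{2}))$ vanishes. The upper bound $d<2(m+1)$ in (1) kills $H^{0}$ of all the higher terms $\Omega^{n-p}_{\mathbb{P}^{n}}(t_{p})$ with $p\ge 3$. The only remaining danger is intermediate cohomology, and on $\mathbb{P}^{n}$ Bott's formula has exactly one intermediate nonvanishing, $H^{q}(\Omega^{q}_{\mathbb{P}^{n}}(0))=\mathbb{C}$. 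So one must solve $t_{p}=0$ along the resolution: this happens for the self-paired term when $n$ is even at $d=\frac{n}{2}m$ (condition (2)), and for $n$ odd at the two degrees $d=\frac{n-1}{2}m-1$ and $d=\frac{n+1}{2}m+1$ (condition (3)), which are symmetric about $\frac{n}{2}m$ in accordance with the self-dual shape of the Koszul complex.

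The main obstacle is precisely this last bookkeeping: one has to verify that under (1)--(3) none of the nonzero cohomology of the higher resolution terms survives on the $H^{0}$-diagonal nor hits $H^{0}(\Omega^{n-2}_{\mathbb{P}^{n}}(t_{2}))$ under a spectral-sequence differential. The computation is entirely controlled by Bott's formula, but the delicate point is matching each forbidden value of $d$ to the unique twist $t_{p}=0$ that produces an obstruction, and checking that the excluded degrees exhaust them; the ``moreover'' statement is the same analysis applied when $t_{2}\le n-2$, the odd case failing only at $d=\frac{n-1}{2}m-1$, which is why that value is singled out.
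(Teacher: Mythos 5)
Your proposal is correct and follows essentially the same route as the paper: both truncate the Koszul complex of $\omega$ (exact by the isolated-singularity hypothesis) into a locally free resolution of $T_{\mathcal{D}}(d-1)$, identify the count with $h^{0}\big(\mathbb{P}^{n},\Omega^{n-2}_{\mathbb{P}^{n}}(t_{2})\big)=\binom{d-m+n}{2}\binom{d-m+n-3}{n-2}$ via Bott's formula, and use conditions (1)--(3) to rule out exactly the interfering cohomology groups. Your hypercohomology spectral sequence is just a repackaging of the paper's chain of short exact sequences: the differentials feeding into $H^{0}(\Omega^{n-2}_{\mathbb{P}^{n}}(t_{2}))$ correspond to the paper's chain of inclusions for $H^{0}(K_{2})$, and the extra diagonal terms correspond to the vanishing $H^{1}(K_{2})=0$ that the paper quotes from Lemma 3.1 of \cite{SoaCo}.
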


Note that for $n=3$, it suffices to consider the hypothesis $m+1\leq d < 2m+1$, and thus there are 
$$\frac{(d-m+3)(d-m+2)(d-m)}{2}$$
\noindent independent vector fields $X$ of degree $d$ on $\mathbb{P}^3$ invariants by $\mathcal{D}$. On the other hand, 
if $d< m+1$ and $d\neq m-1$, we do not have vector field $X$ of degree $d$ invariant by $\mathcal{D}$.


\begin{rmk}
In \cite[Theorem 1.1]{SoaCo} the authors consider a flag $\mathcal{F} \prec \mathcal{G}$ on $\mathbb{P}^{n},$ with 
$\dim (\mathcal{F}) = \codim(\mathcal{G})=1$, $\mathrm{Sing}(\mathcal{G})$ isolated, 
and under certain hypotheses, they show that $\deg (\mathcal{G}) \leq \deg (\mathcal{F}) - 1$.
We see in particular that Theorem \ref{thm 3} recovers this result with fewer hypotheses.
\end{rmk}

Combining Corollary \ref{thm 1} and Theorem \ref{thm 3}, we get

\begin{corollary}\label{coro1.5} We do not have flag $\mathcal{F}\prec \mathcal{G}$ on $\mathbb{P}^{n}$ satisfying the conditions
\begin{enumerate}
 \item[a)] $\dim(\mathcal{F}) = \codim(\mathcal{G}) = 1;$
 \item[b)] both $\mathcal{F}$ and $\mathcal{G}$ have only isolated singularities;
 \item[c)] $\deg(\mathcal{F}) \neq \left(\frac{n-1}{2}\right) \deg(\mathcal{G}) -1,$ if $n$ is odd.
\end{enumerate}
\end{corollary}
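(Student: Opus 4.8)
The plan is to argue by contradiction, assembling the two results the corollary explicitly names: the degree inequality of Corollary \ref{thm 1} and the vanishing (nonexistence) assertion in the ``moreover'' clause of Theorem \ref{thm 3}. I would suppose that a flag $\mathcal{F} \prec \mathcal{G}$ satisfying a), b), c) exists, set $d = \deg(\mathcal{F})$ and $m = \deg(\mathcal{G})$, and derive a contradiction in each parity of $n$.

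First I would translate the flag into the invariance language of the earlier statements. Since $\dim(\mathcal{F}) = 1$, the foliation is defined by a global vector field $X$ of degree $d$; since $\codim(\mathcal{G}) = 1$, the distribution $\mathcal{G}$ is defined by a twisted $1$-form $\omega$ of degree $m$. The containment $\mathcal{F}_p \subset \mathcal{G}_p$ at regular points is precisely the condition $X(p) \in \ker \omega(p)$, that is $i_X \omega = 0$, so $X$ is a nontrivial vector field of degree $d$ left invariant by the codimension one distribution $\mathcal{G}$.

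Next I would apply Corollary \ref{thm 1} to the flag: as $\mathcal{F}$ is one-dimensional with isolated singularities and $\mathcal{G}$ is a Pfaff system leaving it invariant, that corollary gives $d \leq m$, hence $d < m+1$. This places us exactly in the hypothesis of the final sentence of Theorem \ref{thm 3}, applied with $\mathcal{D} = \mathcal{G}$ (codimension one, degree $m$, isolated singularities). Invoking that sentence yields the contradiction: for $n$ even it forbids any nontrivial degree-$d$ vector field invariant by $\mathcal{G}$ in the range $d < m+1$, contradicting the existence of $X$ (and condition c) is vacuous here, so nothing further is needed); for $n$ odd it forbids such a vector field unless $d = \left(\frac{n-1}{2}\right)m - 1$, but hypothesis c) excludes precisely this equality, so again no such $X$ can exist.

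Since the proof is essentially a bookkeeping combination of two already-established facts, I do not expect a substantive obstacle. The only point demanding care is matching conventions: confirming that the flag relation $\mathcal{F} \prec \mathcal{G}$ is the same invariance condition $i_X \omega = 0$ that Theorem \ref{thm 3} counts, and that the inequality $d \leq m$ coming from Corollary \ref{thm 1} lands in the correct ``$d < m+1$'' branch of Theorem \ref{thm 3}, with the odd-$n$ exceptional value $\left(\frac{n-1}{2}\right)m - 1$ stated consistently with condition c). Once these identifications are fixed, the contradiction is immediate in both parities.
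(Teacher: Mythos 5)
Your proposal is correct and is exactly the argument the paper intends: the paper offers no written proof beyond the phrase ``Combining Corollary \ref{thm 1} and Theorem \ref{thm 3}, we get,'' and your contradiction argument (using Corollary \ref{thm 1} to force $\deg(\mathcal{F})\leq\deg(\mathcal{G})<\deg(\mathcal{G})+1$ and then the ``moreover'' clause of Theorem \ref{thm 3} to rule out the invariant vector field in both parities, with condition c) excluding the odd-$n$ exceptional degree) is precisely that combination, spelled out.
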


In the next corollary we partially recover the result in \cite[p.9014]{OmCoJa1} using Theorem \ref{thm 3}.

\begin{corollary} Let $\mathcal{D}$ be a codimension one holomorphic distribution on $\mathbb{P}^{3}$. If 
$\mathrm{Sing}(\mathcal{D})$ has only isolated singularities, then $\mathcal{D}$ is semistable, and $\mathcal{D}$ 
is stable for $\deg (\mathcal{D})\neq 2$. 
\end{corollary}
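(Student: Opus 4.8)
The plan is to reduce the (semi)stability of the rank two tangent sheaf $T_\mathcal{D}$ to the non-existence of tangent vector fields of small degree, and then apply Theorem \ref{thm 3}. Set $m=\deg(\mathcal{D})$. I would first record the numerical data: writing $\omega$ for the twisted $1$-form defining $\mathcal{D}$ and $Z=\mathrm{Sing}(\mathcal{D})$, the isolated-singularity hypothesis makes $Z$ of codimension three, so that $0\to T_\mathcal{D}\to T_{\mathbb{P}^3}\xrightarrow{\omega}\mathcal{I}_Z(m+2)\to 0$ gives $c_1(T_\mathcal{D})=c_1(T_{\mathbb{P}^3})-(m+2)=2-m$. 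Since $T_\mathcal{D}$ is a saturated subsheaf of a locally free sheaf, it is reflexive of rank two, and its slope is $\mu(T_\mathcal{D})=(2-m)/2$.

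Next I would translate stability into a statement about vector fields. On the smooth $\mathbb{P}^3$ every saturated rank one subsheaf of $T_\mathcal{D}$ is a line bundle $\mathcal{O}_{\mathbb{P}^3}(a)$, and a nonzero inclusion $\mathcal{O}(a)\hookrightarrow T_\mathcal{D}$ is the same datum as a nonzero section of $T_\mathcal{D}(-a)=T_\mathcal{D}((1-a)-1)$, that is, a vector field of degree $\delta=1-a$ tangent to $\mathcal{D}$. Comparing slopes, semistability fails precisely when some such subsheaf has $a>(2-m)/2$, i.e. $\delta<m/2$, and stability fails precisely when $a\ge(2-m)/2$, i.e. $\delta\le m/2$. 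Hence it suffices to know for which degrees $\mathcal{D}$ carries a nonzero tangent vector field.

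Here I would invoke the $n=3$ form of Theorem \ref{thm 3}: for $d<m+1$ with $d\ne m-1$, the distribution $\mathcal{D}$ has no nonzero tangent vector field of degree $d$. For $m\ge 2$ every integer $\delta<m/2$ satisfies $\delta\le m$ and $\delta\ne m-1$ (since $m-1\ge m/2$), so no destabilizing subsheaf exists and $\mathcal{D}$ is semistable; for $m=0$ this is trivial. For stability with $m\ne 2$, one checks that every integer $\delta\le m/2$ is either $<m-1$ (when $m\ge 3$) or equals $0\ne m-1$ (when $m=0$), so Theorem \ref{thm 3} again rules out all tangent fields of the forbidden degrees and $T_\mathcal{D}$ is stable. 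This settles all cases except $m=1$, and explains why $m=2$ is excluded from the stable claim: there the critical degree $\delta=1$ coincides with the single value $m-1$ that Theorem \ref{thm 3} does not control.

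The main obstacle is exactly $m=1$, where $\mu=1/2$ and the one dangerous degree is $\delta=0=m-1$, the value Theorem \ref{thm 3} omits; here semistability and stability coincide, so it suffices to exclude a nonzero degree-zero tangent field. I would do this directly: normalizing such a field to $X=\partial_{x_0}$ forces $i_X\omega=0$, hence $\omega=A_1\,dx_1+A_2\,dx_2+A_3\,dx_3$ with quadratic $A_i$ obeying the Euler relation $x_1A_1+x_2A_2+x_3A_3=0$. As $x_1,x_2,x_3$ form a regular sequence, this syzygy is Koszul, giving $(A_1,A_2,A_3)=(x_1,x_2,x_3)\times(f_1,f_2,f_3)$ for linear forms $f_i$. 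Thus $\mathrm{Sing}(\mathcal{D})=\{A_1=A_2=A_3=0\}$ is the rank-drop locus of the $2\times 3$ matrix with rows $(x_1,x_2,x_3)$ and $(f_1,f_2,f_3)$, a nonempty determinantal variety of codimension at most two, hence of dimension at least one. This contradicts the isolated-singularity hypothesis, so no such vector field exists and $T_\mathcal{D}$ is stable for $m=1$ as well, completing the proof.
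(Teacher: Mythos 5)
Your proof is correct, and for $m=\deg(\mathcal{D})\geq 2$ it is essentially the paper's own argument: compute $\mu(T_{\mathcal{D}})=(2-m)/2$, identify rank-one subsheaves of $T_{\mathcal{D}}$ with tangent vector fields of some degree $\delta\geq 0$, and use the $n=3$ case of Theorem \ref{thm 3} to exclude every slope-violating degree except $\delta=m-1$, which then either satisfies the strict slope inequality ($m>2$) or gives equality of slopes ($m=2$). The genuine divergence is at $m=1$, and it is in your favor. The paper dismisses this case with the sentence ``$m\neq 1$ by Corollary \ref{coro1.5}'', but that citation does not do the job: for $n=3$, Corollary \ref{coro1.5} only forbids flags with $\deg(\mathcal{F})\neq\deg(\mathcal{G})-1$, whereas the dangerous configuration here is a degree-$0$ vector field tangent to a degree-$1$ distribution, i.e. $\deg(\mathcal{F})=0=\deg(\mathcal{G})-1$, exactly the case that corollary permits (and a degree-$0$ field has a single, hence isolated, singular point, so hypothesis b) there is satisfied). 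Your direct argument closes precisely this hole: after normalizing $X=\partial_{x_0}$, the Euler relation $x_1A_1+x_2A_2+x_3A_3=0$ together with the fact that syzygies of the regular sequence $x_1,x_2,x_3$ are Koszul exhibits $\mathrm{Sing}(\mathcal{D})$ as the rank-one locus of a $2\times 3$ matrix of linear forms, which by the Eagon--Northcott height bound is nonempty of codimension at most $2$ in $\mathbb{P}^{3}$, so $\dim\mathrm{Sing}(\mathcal{D})\geq 1$, contradicting isolatedness. So your proposal is not merely an alternative route; it supplies a correct treatment of the one step in the paper's proof that, as written, is a non sequitur, at the modest cost of a short commutative-algebra digression.
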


\begin{proof} Let $\omega \in H^{0}(\mathbb{P}^{3}, \Omega_{\mathbb{P}^{3}}^{1}(m+2))$ be a global section inducing $\mathcal{D}$, 
$m=\deg(\mathcal{D})$. Then $\mu(T_{\mathcal{D}})=\frac{2-m}{2}$. Given 
$\mathcal{F}$ any nontrivial subsheaf of $\mathcal{D}$, it can be represented by a global section 
$X \in H^{0}(\mathbb{P}^{3}, T_{\mathbb{P}^{3}}(d-1))$, $d\geq0$, and so its slope is $\mu(T_{\mathcal{F}})=1-d$.
We have two possibilities, either $m+1\leq d$ or $d<m+1$. The inequality $m+1 \leq d$ implies $\mu(T_{\mathcal{F}}) < \mu(T_{\mathcal{D}})$.

On the other hand, in the inequality $d<m+1$, if $d \neq m-1$, then by Theorem \ref{thm 3} we do not have nontrivial subsheaf of $\mathcal{D}$ of rank one, and so $d=m-1$, it follows $\mu(T_{\mathcal{F}}) < \mu(T_{\mathcal{D}})$ for $m>2$, and 
$\mu(T_{\mathcal{F}})=\mu(T_{\mathcal{D}})=0$ for $m=2$. Note that $m \neq 1$ by Corollary \ref{coro1.5}.
\end{proof}

At the end of the paper, we consider the Poincar\'e problem for flags in certain classes of foliations,
such as logarithmic and pull-back foliations, as well as decomposable/split distributions. 
For instance, in the logarithmic case, we obtain the following results.

\begin{theorem}\label{A} Let $\mathcal{G}\in\mathcal{L}(n,p,d_1,\ldots,d_r)$, $r\geq p+1$, be a logarithmic foliation on 
$\mathbb{P}^{n}$, $n\geq p+2$, induced in homogeneous coordinates by a $p$-form
$$\omega=F_1\cdots F_r\cdot\hspace{-1mm}\sum_{\textsc{I}=(i_1<\cdots<i_p)}
\lambda_{\,\textsc{I}}\frac{dF_{i_1}}{F_{i_1}}\wedge\cdots\wedge\frac{dF_{i_p}}{F_{i_p}},$$
for some irreducible homogeneous polynomials $F_i$ of degree $d_i\geq 1$ and $\lambda_{\textsc{\,I}}\neq 0$.  
Set $\left|d\right|=\sum d_i$, $V=V_{i_1,\ldots,i_{p+1}}=V(F_{i_1},\ldots,F_{i_{p+1}})$ and $R=\mathrm{reg}\big(\mathrm{Sing}(V)\big)$.
Let $\mathcal{F}\prec\mathcal{G}$ be a flag of holomorphic foliations on $\mathbb{P}^{n}$. 
\vskip 0.2cm
\begin{enumerate}
	\item If some $\left(F_{i}=0\right)$ is smooth and $\dim(\mathcal{F})=1$, then 
$$\deg(\mathcal{G})\leq\deg(\mathcal{F})+\left|d\right|-d_i-p.$$	
	\item If some $\left(F_{i}=0\right)$ is a normal crossing hypersurface and $\dim(\mathcal{F})=1$, then
$$\deg(\mathcal{G})\leq\deg(\mathcal{F})+\left|d\right|-d_i+n-p-1.$$		
\item Suppose that $V$ is a complete intersection curve $(n=p+2)$ and $\dim(\mathcal{F})=1$.
\vskip 0.2cm
	\begin{enumerate}
		\item[a)] If $V$ is a smooth and $V\not\subset\mathrm{Sing}(\mathcal{F})$, then 
$$\deg(\mathcal{G})\leq\deg(\mathcal{F})+\left|d\right|-\sum_{k=1}^{p+1} d_{i_k}.$$ 
		\item[b)] If $V$ is reduced with at most ordinary nodes as singularities, and $V\not\subset\mathrm{Sing}(\mathcal{F})$, then
$$\deg(\mathcal{G})\leq\deg(\mathcal{F})+\left|d\right|-\sum_{k=1}^{p+1} d_{i_k}+1.$$
	\end{enumerate} 
	\item Suppose that $V$ is a complete intersection and $\mathrm{codim}(\mathcal{F})=p+1$. 
\vskip 0.2cm
\begin{enumerate}
	\item[a)] Assume $V$ is reduced and 
$\mathrm{dim}\hspace{-0.2mm}\big(\mathrm{Sing}(\mathcal{F})\cap V\big)< n-p-1$. If
$R\leq\sum_{k=1}^{p+1} d_{i_k}-p-2$, then 
$$\deg(\mathcal{G})\leq\deg(\mathcal{F})+\left|d\right|-\sum_{k=1}^{p+1} d_{i_k};$$ 
and if $R>\sum_{k=1}^{p+1} d_{i_k}-p-2$, then
$$\deg(\mathcal{G})\leq \frac{1}{2}\hspace{-0.2mm}\big(\deg(\mathcal{F})+R+1\big)+\left|d\right|-\sum_{k=1}^{p+1} d_{i_k}.$$	
  \item[b)] If $V$ is nonsingular in codimension $1$ and $V\not\subset\mathrm{Sing}(\mathcal{F})$, then 
$$\deg(\mathcal{G})\leq\deg(\mathcal{F})+\left|d\right|-\sum_{k=1}^{p+1} d_{i_k}+1.$$
\end{enumerate}
\end{enumerate}
\end{theorem}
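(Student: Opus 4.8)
The common strategy is to transfer invariance down the flag and then, in each case, to apply a suitable Poincar\'e-type inequality to the subfoliation $\mathcal{F}$; the stated bounds follow by substituting the degree of the logarithmic foliation. I would begin with that degree. For fixed $\textsc{I}=(i_1<\cdots<i_p)$ the corresponding summand of $\omega$ is $\frac{F_1\cdots F_r}{F_{i_1}\cdots F_{i_p}}\,dF_{i_1}\wedge\cdots\wedge dF_{i_p}$, whose coefficients have degree $(|d|-\sum_j d_{i_j})+(\sum_j d_{i_j}-p)=|d|-p$; hence $\omega$ is a section of $\Omega^p_{\mathbb{P}^n}(|d|)$ and $\deg(\mathcal{G})=|d|-p-1$. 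This single identity converts every conclusion into a lower bound for $\deg(\mathcal{F})$: for instance (1) becomes $\deg(\mathcal{F})\geq d_i-1$, and (3a) becomes $\deg(\mathcal{F})\geq\sum_k d_{i_k}-p-1$.

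The key lemma is that $\mathcal{F}$ leaves each factor hypersurface $(F_i=0)$ invariant, and consequently leaves the complete intersection $V=V(F_{i_1},\ldots,F_{i_{p+1}})$ invariant. Every $(F_i=0)$ is $\mathcal{G}$-invariant, since $F_i$ divides $dF_i\wedge\omega$ by the logarithmic normal form (the terms with $i\in\textsc{I}$ vanish, and the surviving ones retain the factor $F_i$). As $\mathrm{Sing}(\mathcal{G})$ and $\mathrm{Sing}(\mathcal{F})$ have codimension at least two, a generic point $x$ of the hypersurface lies outside both, where the flag yields $\mathcal{F}_x\subseteq\mathcal{G}_x\subseteq T_x(F_i=0)$; thus $(F_i=0)$ is $\mathcal{F}$-invariant. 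For $V$, at a generic smooth point $x\in V$ avoiding $\mathrm{Sing}(\mathcal{F})$ — available precisely because $V\not\subset\mathrm{Sing}(\mathcal{F})$, resp.\ $\dim(\mathrm{Sing}(\mathcal{F})\cap V)<n-p-1$ in (4) — the inclusions $\mathcal{F}_x\subseteq T_x(F_{i_k}=0)$ for all $k$ give $\mathcal{F}_x\subseteq\bigcap_k T_x(F_{i_k}=0)=T_xV$; since $\dim\mathcal{F}=\dim V$ in cases (3) and (4), this forces equality, so $V$ is $\mathcal{F}$-invariant.

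With invariance in hand, each item is an application of a known Poincar\'e-type bound followed by the substitution $\deg(\mathcal{G})=|d|-p-1$. Item (1) uses $d_i\leq\deg(\mathcal{F})+1$ for a smooth invariant hypersurface, and (2) the corresponding normal-crossing estimate. Items (3a), (3b) and (4b) use the bounds for a smooth, a nodal, and a nonsingular-in-codimension-one invariant complete intersection, respectively, in which the controlled quantity is the total degree $\sum_k d_{i_k}$ of $V$ (the degree of $\det N_V$); the displayed inequalities are then immediate. The first branch of (4a), under $R\leq\sum_k d_{i_k}-p-2$, reduces to the same smooth-type bound.

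The main obstacle is the second branch of (4a), which must produce the regularity-dependent inequality $\deg(\mathcal{G})\leq\frac{1}{2}\big(\deg(\mathcal{F})+R+1\big)+|d|-\sum_k d_{i_k}$. Here the smooth bound fails, and I would instead invoke the refined Poincar\'e inequality for a singular reduced complete intersection, which controls $\sum_k d_{i_k}$ by $\deg(\mathcal{F})$ together with the Castelnuovo--Mumford regularity $R=\mathrm{reg}\big(\mathrm{Sing}(V)\big)$ of the singular locus; this is the source of both the dichotomy on $R$ and the factor $\frac{1}{2}$. The delicate points are verifying that the codimension hypothesis $\dim(\mathrm{Sing}(\mathcal{F})\cap V)<n-p-1$ places us in the regime where that inequality holds, and that $R$ governs the cohomology vanishing underlying it; granting this, the conclusion again follows by rearranging against $\deg(\mathcal{G})=|d|-p-1$.
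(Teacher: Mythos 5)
Your proposal is correct and follows the paper's overall strategy: compute $\deg(\mathcal{G})=|d|-p-1$, transfer invariance down the flag, and conclude by quoting the known Poincar\'e-type bounds (Soares/Esteves for a smooth invariant hypersurface, Brunella--Mendes for normal crossings, Soares and Campillo--Carnicer/Esteves for the curve cases, and the regularity bounds of Corr\^ea and Cruz--Esteves for item (4), whose defect $\rho=R+p+2-\sum_k d_{i_k}$ is exactly what produces the dichotomy in $R$ and, after moving the copy of $\sum_k d_{i_k}$ hidden inside $\rho$ to the left-hand side, the factor $\tfrac12$; so your ``granting this'' step is precisely what those citations supply). The one genuine divergence is the key lemma that $V$ is $\mathcal{F}$-invariant in items (3)--(4). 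The paper rewrites $\omega$ with the factor $F_{i_{p+1}}$ pulled out to show $V\subset\mathrm{Sing}(\mathcal{G})$, and then invokes the structural fact (Yoshizaki, cited in the preliminaries) that the singular set of the larger member of a flag is invariant under the smaller one. You instead prove that each hypersurface $(F_{i_k}=0)$ is $\mathcal{G}$-invariant via $F_{i_k}\mid dF_{i_k}\wedge\omega$, push this down to $\mathcal{F}$ at generic points using the flag inclusion, and intersect: at a generic point $x$ of the reduced complete intersection $V$ the differentials $dF_{i_k}(x)$ are independent, so $\bigcap_k T_x(F_{i_k}=0)=T_xV$. Your route is more self-contained: it avoids Yoshizaki's theorem and also sidesteps a subtlety the paper leaves implicit, namely that containment of $V$ in the invariant set $\mathrm{Sing}(\mathcal{G})$ does not by itself make $V$ invariant (one needs $V$ to be a union of irreducible components of $\mathrm{Sing}(\mathcal{G})$, or an argument like yours). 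The cost is the genericity/transversality bookkeeping (full rank of the Jacobian at generic points of a reduced complete intersection), which the hypotheses of each item --- $V$ reduced, smooth, nodal, or nonsingular in codimension one, together with $V\not\subset\mathrm{Sing}(\mathcal{F})$, resp.\ $\dim\big(\mathrm{Sing}(\mathcal{F})\cap V\big)<n-p-1$ --- do supply. Both arguments then finish identically.
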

\vskip 0.2cm

As a consequence, we obtain results concerning the Mumford-Takemoto stability of the rank $2$ tangent sheaves $T_{\mathcal{G}}$; see Section \ref{Ineq}. 

\bigskip

The paper is organized as follows. First, to make this paper as self-contained as possible, we provide all the necessary definitions and considerations in Section \ref{Pre}. The proofs of our main results appear in Sections \ref{Proof}, \ref{Proof2}, and \ref{Ineq}. 
In Section \ref{Exa}, we present some examples.

\bigskip
\subsection*{Acknowledgments}
The authors wish to express their gratitude to Maur\'icio Corr\^ea and
Alan Muniz for helpful comments and suggestions. The authors also thank the anonymous referees for providing many suggestions that helped improve the paper's presentation and for encouraging them to extend Theorem 1.1 to more cases. The authors were partially supported by the FAPEMIG [grant number 38155289/2021]. 
\bigskip

\section{Preliminaries} \label{Pre}

In this paper, we introduce a new approach to the flag considering instead only holomorphic foliations, Pfaff systems (possibly non locally decomposable). For this purpose, let us consider some notations.
\subsection{Pfaff systems on $\mathbb{P}^{n}$}
In this Subsection, we deal with Pfaff systems on the complex projective space $\mathbb{P}^{n}$. In its original meaning,
a Pfaffian system is a system of Pfaffian equations $\omega_{1} = \cdots = \omega_{k}= 0$, where each $\omega_{i}$ is a holomorphic $1$-form. Denote by $\Omega_{\mathbb{P}^{n}}^{k}$ the sheaf of germs of 
$k$-forms on $\mathbb{P}^{n}$. Given $\mathcal{O}_{\mathbb{P}^{n}}(l)$ 
a line bundle on $\mathbb{P}^{n}$ we use the notation (twisted $k$-forms)
$$\Omega_{\mathbb{P}^{n}}^{k}(l) := \Omega_{\mathbb{P}^{n}}^{k} \otimes \mathcal{O}_{\mathbb{P}^{n}}(l).$$

\begin{definition}\label{defiPfaff} A Pfaff system $\mathscr{F}$ of codimension $k$ on $\mathbb{P}^{n}$ is induced by a 
nontrivial section (a twisted $k$-form) 
$$\omega_{\mathscr{F}} \in H^{0}(\mathbb{P}^{n}, \Omega_{\mathbb{P}^{n}}^{k}(l)).$$
\end{definition}

The singular set of $\mathscr{F}$ is naturally defined as
$$\Sing(\mathscr{F}) = \{\,p \in \mathbb{P}^{n};\,\omega_{\mathscr{F}}(p) = 0 \}.$$

We remark that a {\it holomorphic distribution} of codimension $k$ is a twisted $k$-form that is locally decomposable outside its singular set, i.e., for any point $p \in \mathbb{P}^{n}\setminus \Sing(\mathscr{F})$, there exists an open neighborhood $U$ of $p$ and holomorphic 
$1$-forms $\omega_{1}, \ldots , \omega_{k}$ on $U$ such that
\begin{equation}\label{eq 001}
\omega_{\mathscr{F}}|_{U}= \omega_{1} \wedge \cdots \wedge \omega_{k}.
\end{equation}

We note that, for $k=2$, the decomposable condition can be reformulated as follows 
(see \cite[Proposition 1, p.5]{CerLN}), \textit{ $\omega|_{U}$ is decomposable if and only if $\omega|_{U} \wedge \omega|_{U} =0$}. 

Again we observe that a {\it holomorphic foliation} of codimension $k$ is an integrable distribution, where the integrability condition is given by
\begin{equation}\label{eq002}
d\omega_{i} \wedge \omega_{1} \wedge \cdots \wedge \omega_{k} = 0,
\end{equation}

\noindent for $i=1, \ldots , k$. 
For a background in holomorphic distribution and foliation, we refer \cite{BB1,Baum} and \cite{OmCoJa1} (and references therein).
Throughout the text, we consider reduced holomorphic foliations, i.e., 
holomorphic foliations whose singular set has codimension greater than $1$.

Given a twisted $k$-form on $\mathbb{P}^{n}$, it can be determined by the following data, see \cite{C}.

\begin{enumerate}
\item[a)] an open covering $\{U_{\alpha} \}_{\alpha \in \Lambda}$ of $\mathbb{P}^{n};$
\item[b)] holomorphic $k$-forms $\omega_{\alpha} \in H^{0}(U_{\alpha}, \Omega_{U_{\alpha}}^{k})$, satisfying
$$\omega_{\alpha} = h_{\alpha \beta}\omega_{\beta} \ \ \mbox{on} \ \ U_{\alpha}\cap U_{\beta} \neq \emptyset,$$
\noindent where $h_{\alpha \beta} \in \mathcal{O}_{\mathbb{P}^{n}}(U_{\alpha}\cap U_{\beta})^{\ast}$ 
determines a cocycle representing $\mathscr{F}$. 
\end{enumerate}
\bigskip
Let $\mathscr{F}$ be a Pfaff system of codimension $k$ induced by the twisted $k$-form 
$\omega \in H^{0}(\mathbb{P}^{n}, \Omega_{\mathbb{P}^{n}}^{k}(l))$
and $i: H \simeq \mathbb{P}^{k}\hookrightarrow \mathbb{P}^{n}$ be a generic non-invariant linearly embedded subspace. 
Then we get the non-trivial section as follow
$$i^{\ast}\omega \in H^{0}(H, \Omega_{H}^{k}(l)) \simeq H^{0}(\mathbb{P}^{k}, \mathcal{O}_{\mathbb{P}^{k}}(-k-1+l)).$$


The \textit{degree of the Pfaff system} $\mathscr{F}$, denoted by $\deg(\mathscr{F})$, is defined as 


$$\deg(\mathscr{F}) = -k-1+l.$$

\noindent Then we can consider $\mathscr{F}$ on $\mathbb{P}^{n}$ of codimension $k$ given by
$$\omega \in H^{0}(\mathbb{P}^{n}, \Omega_{\mathbb{P}^{n}}^{k}(m+k+1)),$$

\noindent where $m = \deg (\mathscr{F})$. The Euler sequence implies that a Pfaff system of codimension $k$ and degree $m$ can be induced by a polynomial differential $k$-form $\omega$ with homogeneous coefficients of degree $m+1$ on $\mathbb{C}^{n+1}$ such that 
$i_{\vartheta}\omega=0$, where $\vartheta$ is the radial vector field defined by

$$\vartheta = z_{0}\dfrac{\partial}{\partial z_{0}} + \cdots + z_{n}\dfrac{\partial}{\partial z_{n}},$$

\noi and $z_{0}, \ldots,  z_{n}$ are homogeneous coordinates on $\mathbb{P}^{n}$. To general theory of Pfaff systems we refer \cite{C,CJ,CoMa,CoFeVer} and \cite{CEKLE}.

\subsection{Flag of Pfaff systems} Let us introduce a new notion of flag, the flag of Pfaff systems on $\mathbb{P}^{n}$, 
that can be considered on any complex manifold.

\begin{definition} Let $\mathcal{D}_{1}$ and $\mathcal{D}_{2}$ be two holomorphic distributions of dimensions $k_{1}$ and $k_{2}$ respectively, $k_1<k_2$, on $\mathbb{P}^{n}$. We say that $\mathcal{D}_{1} \prec \mathcal{D}_{2} $ is a $2$-flag of holomorphic distributions if $\mathcal{D}_{1}$ is a coherent sub $\mathcal{O}_{\mathbb{P}^{n}}$-module of $\mathcal{D}_{2}$. Furthermore if each $\mathcal{D}_{i}$ is integrable as in (\ref{eq002}) we say that we have a $2$-flag of holomorphic foliations. 
\end{definition}

We remark that when we have integrability, the flag condition implies that the leaves of foliation $\mathcal{D}_{1}$ are contained in leaves of $\mathcal{D}_{2}$ outside their singular sets. In this case, the singular set $\mathrm{Sing}(\mathcal{D}_2)$ is invariant by
$\mathcal{D}_{1}$; see \cite{Yo}. For more details about flags of holomorphic foliations/distributions, 
see for instance \cite{BCL,SoaCo,Fei} and \cite{Mol}.

\begin{definition} Let $\mathcal{F}$ be a foliation of codimension $n-1$ induced by a vector field 
$X \in H^{0}(\mathbb{P}^{n}, T_{\mathbb{P}^{n}}(d-1))$. Let $\mathcal{G}$ be a Pfaff system of codimension $k$ 
induced by $\omega\in H^{0}(\mathbb{P}^{n}, \Omega_{\mathbb{P}^{n}}^{k}(l))$, with $1\leq k <n-1$. 
We say that $\mathcal{F}\prec\mathcal{G}$ is a $2$-flag if 
$$i_{X}\omega = 0.$$
\end{definition}

In a more general case
\begin{definition} Let $\mathscr{F}_{1}$ and $\mathscr{F}_{2}$ be two Pfaff systems of codimensions $k_{1}$ 
and $k_{2}$ respectively, $k_{2} < k_{1}$, on $\mathbb{P}^{n}$, induced by 
$\omega_i\in H^{0}(\mathbb{P}^{n}, \Omega_{\mathbb{P}^{n}}^{k_i}(l_i))$, $i=1,2$. 
We say that $\mathscr{F}_{1}\prec\mathscr{F}_{2}$ is a $2$-flag if outside their singular sets, we have
$$\kerr(\omega_{1}) \subset \kerr(\omega_{2}),$$
\noindent where
$$\kerr(\omega)=\left\{v\in T_{\mathbb{P}^{n}}\,;\,i_v\omega=0\right\}.$$
\end{definition}

Given a flag $\mathscr{F}_{1}\prec\mathscr{F}_{2}$ of Pfaff systems, we say that $\mathscr{F}_{2}$ leaves $\mathscr{F}_{1}$ invariant or that $\mathscr{F}_{1}$ is tangent to $\mathscr{F}_{2}$, or that $\mathscr{F}_{1}$ is invariant by 
$\mathscr{F}_{2}$. Naturally, we define the singular set of the flag $\mathscr{F}_{1}\prec\mathscr{F}_{2}$ as the following analytic set
$$\Sing(\mathscr{F}_{1}) \cup \Sing(\mathscr{F}_{2}).$$

Consider the flag of holomorphic foliations $\mathcal{F}_{1}\prec\mathcal{F}_{2}$. Thus, each foliation induces a short exact sequence of sheaves for $i=1,2.$
$$0 \longrightarrow \mathcal{F}_{i} \longrightarrow T_{\mathbb{P}^{n}} \longrightarrow N_{i} \longrightarrow 0,$$

\noindent with $N_{i}= T_{\mathbb{P}^{n}}/\mathcal{F}_{i}  $ the normal sheaf of the foliation $\mathcal{F}_{i}.$ If we denote by $N_{12}$ the relative quotient sheaf $\mathcal{F}_{2}/ \mathcal{F}_{1}$ and by $N_{\mathcal{F}}$ the normal sheaf $N_{12}\oplus N_{1}$, we have the turtle diagram

$$ \xymatrix{ 0 \ar[rd]  &  & 0 \ar[ld] &  & 0  \\
   & \mathcal{F}_{1} \ar[rd] \ar[dd] &  & N_{2} \ar[lu] \ar[ru] &  \\
   &  & T_{\mathbb{P}^{n}} \ar[ru] \ar[rd]  &  &  \\
   & \mathcal{F}_{2} \ar[ru]   \ar[rd] &  & N_{1} \ar[uu]  \ar[rd] &  \\
 0 \ar[ru] &  & N_{12} \ar[ru] \ar[rd]  &  & 0 \\
  & 0 \ar[ru] &   &  0 &  }$$

This is very useful to compute characteristic classes of flag, see \cite{BCL,CoLo,Su1} and \cite{Suwa2}. 

\subsection{Mumford-Takemoto stability}\label{stab} Let $L$ be a very ample line bundle on a $n$-dimensional smooth projective variety $X$. 
The slope of a torsion free coherent sheaf $E$ on $X$ is defined by
$$\mu_L(E) = \dfrac{c_1(E) \cdot L^{n-1}}{\rank(E)}.$$
Recall that $E$ is said to be stable (respectively semistable) if every proper nontrivial subsheaf $F\subset E$ with $\rank(F)<\rank(E)$
satisfies $\mu_L(F)<\mu_L(E)$ (respectively $\mu_L(F)\leq\mu_L(E)$). 
We will say that a holomorphic foliation $\mathcal{F}$ on $X$ is stable (respectively semistable) when its tangent sheaf $T_{\mathcal{F}}$
is stable (respectively semistable). 

When $X=\mathbb{P}^{n}$, we take $L=\mathcal{O}_{\mathbb{P}^{n}}(1)$ and write $\mu=\mu_L$. Let $\mathcal{F}$ be a holomorphic foliation of codimension $k$ on $\mathbb{P}^{n}$ given by $\omega \in H^{0}(\mathbb{P}^{n}, \Omega_{\mathbb{P}^{n}}^{k}(\deg(\mathcal{F})+k+1))$. 
Since the singular set of $\mathcal{F}$ has codimension at least two we obtain the adjunction formula 
$$K_{\mathbb{P}^{n}}=K_{\mathcal{F}}\otimes \det N^{\ast}_{\mathcal{F}},$$
where $K_{\mathcal{F}}=\left(\wedge^{n-k}\,T^{\ast}_{\mathcal{F}}\right)^{\ast\ast}$ is the canonical bundle and $N_{\mathcal{F}}$ the normal bundle of $\mathcal{F}$. Note that 
$\det(N_{\mathcal{F}})=\mathcal{O}_{\mathbb{P}^{n}}(\deg(\mathcal{F})+k+1)$.
In particular, we have
$$\mu(T_{\mathcal{F}})=\frac{\dim(\mathcal{F})-\deg(\mathcal{F})}{\dim(\mathcal{F})}.$$
Therefore $\mathcal{F}$ is stable (respectively semistable) if and only if for every distribution $\mathcal{D}$ tangent to $\mathcal{F}$ 
we have $\frac{\deg(\mathcal{F})}{\dim(\mathcal{F})}<\frac{\deg(\mathcal{D})}{\dim(\mathcal{D})}$ (respectively $\leq$).
For more details about stability and holomorphic foliations, see for instance \cite{OmCoJa1, LoJoTo} and \cite{OSS}.

\subsection{Bott's formula}

To finish the preliminary part, we describe a little bit about the dimension of certain cohomology group, namely, the classical Bott's formulas, see \cite{SoaCo} and \cite{OSS}. Consider, for this purpose, $n$ a positive number, $p,q,r$ and $s$ nonnegative numbers and $k$ and 
$t$ integer numbers, then with the notation
$$h^{q}(\mathbb{P}^{n}, \Omega_{\mathbb{P}^{n}}^{p}(k))=\dim H^{q}(\mathbb{P}^{n}, \Omega_{\mathbb{P}^{n}}^{p}(k))\,\,\,\,\mbox{and}\,\,\,\,
h^{s}(\mathbb{P}^{n}, \bigwedge^{r}T_{\mathbb{P}^{n}}(t))=\dim H^{s}(\mathbb{P}^{n}, \bigwedge^{r}T_{\mathbb{P}^{n}}(t)),$$

\noindent we have
$$h^{q} (\mathbb{P}^{n}, \Omega_{\mathbb{P}^{n}}^{p}(k))=\left\{\begin{array}{clccc}

\binom{k+n-p}{k}\binom{k-1}{p}  & q=0,\, 0\leq p \leq n \ \ \mbox{and} \ \ k>p; \\

1 & k=0 \ \ \mbox{and} \ \ 0\leq p=q \leq n; \\

\binom{-k+p}{-k}\binom{-k-1}{n-p} & q=n,\, 0 \leq p \leq n \ \ \mbox{and} \ \ k<p-n; \\

0 & \mbox{othewise}.

\end{array}\right.$$

\noindent and

$$h^{s} (\mathbb{P}^{n}, \bigwedge^{r}T_{\mathbb{P}^{n}}(t))=\left\{\begin{array}{clccc}

\binom{t+n+r+1}{t+n+1}\binom{t+n}{n-r} & s=0,\, 0\leq r \leq n \ \ \mbox{and} \ \ t+r\geq 0; \\

1 & t=-n-1 \ \ \mbox{and} \ \ 0\leq n-r =s \leq n; \\

\binom{-t+r-1}{-t-n-1}\binom{-t-n-2}{r} & s=n,\, 0 \leq r \leq n \ \ \mbox{and} \ \ t+n+r+2 \leq 0; \\

0 & \mbox{othewise}.

\end{array}\right.$$

\bigskip

\section{Proof of Theorem \ref{thm 2}} \label{Proof}

\begin{proof} Given the foliation $\mathcal{F}$, we can consider the global section $X \in H^{0}(\mathbb{P}^{n}, T_{\mathbb{P}^{n}}(d-1))$ such that induces it. Set $1\leq r \leq n-2$. Now, we take the morphism of sheaves induced by vector field $X$, as follows
$$i_{X} : \Omega^{r}_{\mathbb{P}^{n}}(m+r+1) \longrightarrow \Omega^{r-1}_{\mathbb{P}^{n}}(m+r+d).$$

We consider the exact Koszul complex associated to the section $X$, 
\begin{equation}\label{eq 1}
0 \rightarrow \bigwedge^{n} (T_{\mathbb{P}^{n}}(d-1))^{\ast}\rightarrow \cdots \rightarrow \bigwedge^{2}(T_{\mathbb{P}^{n}}(d-1))^{\ast}\rightarrow  (T_{\mathbb{P}^{n}}(d-1))^{\ast}\rightarrow I_{Z} \rightarrow 0,
\end{equation}

\noindent where $Z$ represents the locus of singular set of the foliation $\mathcal{F},$ see \cite{Eis} and \cite{GrifHa}. 
Note that the sequence is exact since $Z$ is isolated. We can rewrite
$$0 \rightarrow \Omega^{n}_{\mathbb{P}^{n}}(-n(d-1))\rightarrow \cdots 
\rightarrow \Omega^{2}_{\mathbb{P}^{n}}(-2(d-1)) \rightarrow \Omega^{1}_{\mathbb{P}^{n}}(-(d-1))\rightarrow I_{Z} \rightarrow 0.$$

Tensorizing by $\mathcal{O}_{\mathbb{P}^{n}}(m+rd+1)$ we get
$$\begin{array}{lll}
0 \rightarrow \Omega^{n}_{\mathbb{P}^{n}}(-n(d-1)+m+rd+1)\rightarrow  \cdots \rightarrow \Omega^{r}_{\mathbb{P}^{n}}(-r(d-1)+m+rd+1)
\stackrel{i_{X}}{\longrightarrow} \cdots \\ \\ 

\cdots \rightarrow \Omega^{2}_{\mathbb{P}^{n}}(-2(d-1)+m+rd+1) \rightarrow \Omega^{1}_{\mathbb{P}^{n}}(-(d-1)+m+rd+1)\rightarrow I_{Z}(m+rd+1)\rightarrow 0. 
\end{array}$$

Now, we break this long exact sequence into some short exact sequences and use the notation $t_{l} = -l(d-1) +m+rd+1$, for $l=1,\ldots,n$.
Then we have

\begin{equation}\label{eeqq}
\begin{array}{llll}
0 \rightarrow \Omega^{n}_{\mathbb{P}^{n}}(t_{n}) \rightarrow \Omega^{n-1}_{\mathbb{P}^{n}}(t_{n-1}) \rightarrow K_{n-2} \rightarrow 0  \\
\ \ \ \ \ \ \ \ \ \ \ \ \ \ \ \ \ \ \ \ \ \vdots \vspace{1mm} \\ 
\ \ \ 0 \rightarrow K_{r} \rightarrow \Omega^{r}_{\mathbb{P}^{n}}(t_{r}) 
\stackrel{i_{X}}{\longrightarrow} K_{r-1} \rightarrow 0 \vspace{0,5mm} \\
\ \ \ \ \ \ \ \ \ \ \ \ \ \ \ \ \ \ \ \ \ \vdots \vspace{1mm} \\ 
0 \rightarrow K_{1} \rightarrow \Omega^{1}_{\mathbb{P}^{n}}(t_1) \rightarrow I_{Z}(m+rd+1)\rightarrow 0. 
\end{array}
\end{equation}

\bigskip
We need calculate the dimension $\dim H^{0}(\mathbb{P}^{n}, K_{r}),$ since each $\omega \in H^{0}(\mathbb{P}^{n}, K_{r})$ satisfies 
$i_{X}\omega=0.$ For this we take the specific long exact sequence of cohomology in $(\ref{eeqq})$
\begin{equation}\label{eqqq4}
\begin{array}{llll}
0\hspace{-1.5mm} &\rightarrow H^{0}(\mathbb{P}^{n}, K_{r+1})  \rightarrow H^{0}(\mathbb{P}^{n}, \Omega^{r+1}_{\mathbb{P}^{n}}(t_{r+1}))  \rightarrow H^{0}(\mathbb{P}^{n}, K_{r}) \rightarrow \\
&\rightarrow H^{1}(\mathbb{P}^{n}, K_{r+1}) \rightarrow H^{1}(\mathbb{P}^{n}, \Omega^{r+1}_{\mathbb{P}^{n}}(t_{r+1})) \rightarrow H^{1}(\mathbb{P}^{n}, K_{r}) \rightarrow \cdots \end{array}
\end{equation}

We need the following auxiliary result.

\begin{lemma}\label{lemma3} Under the above conditions $H^{1}(\mathbb{P}^{n}, K_{r+i}) = 0,$ for $i = 1,2,\ldots,n-1-r.$
\end{lemma}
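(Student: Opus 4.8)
The plan is to compute the groups $H^{1}(\mathbb{P}^{n}, K_{r+i})$ by climbing up the Koszul resolution $(\ref{eeqq})$ through dimension shifting, thereby reducing every one of them to a single intermediate cohomology group of the line bundle $\Omega^{n}_{\mathbb{P}^{n}}(t_{n})$ sitting at the top of the complex. First I would record the short exact sequences extracted from $(\ref{eeqq})$: for each $1\le j\le n-1$,
\[
0 \to K_{j} \to \Omega^{j}_{\mathbb{P}^{n}}(t_{j}) \to K_{j-1} \to 0,
\]
where $K_{0}=I_{Z}(m+rd+1)$ and, at the top, the injectivity of the leftmost Koszul map identifies $K_{n-1}=\Omega^{n}_{\mathbb{P}^{n}}(t_{n})$. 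Taking the long exact cohomology sequence of the sequence indexed by $j+1$ produces the connecting piece
\[
H^{q}(\Omega^{j+1}_{\mathbb{P}^{n}}(t_{j+1})) \to H^{q}(K_{j}) \xrightarrow{\ \delta\ } H^{q+1}(K_{j+1}) \to H^{q+1}(\Omega^{j+1}_{\mathbb{P}^{n}}(t_{j+1})),
\]
so that $\delta$ is an isomorphism $H^{q}(K_{j})\cong H^{q+1}(K_{j+1})$ as soon as the two bordering groups vanish.

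Next I would iterate this isomorphism starting from $q=1$, $j=r+i$, obtaining the chain
\[
H^{1}(K_{r+i}) \cong H^{2}(K_{r+i+1}) \cong \cdots \cong H^{\,n-r-i}(K_{n-1}) = H^{\,n-r-i}(\Omega^{n}_{\mathbb{P}^{n}}(t_{n})).
\]
At the step carrying cohomological degree $1+s$ to $2+s$ the bordering terms to be killed are $H^{1+s}$ and $H^{2+s}$ of $\Omega^{r+i+s+1}_{\mathbb{P}^{n}}(t_{r+i+s+1})$. By Bott's formula, an intermediate ($0<q<n$) cohomology group of a twisted $p$-form $\Omega^{p}_{\mathbb{P}^{n}}(k)$ is nonzero only in the diagonal case $q=p$ with $k=0$; matching indices, this would force $r+i=0$ or $r+i=1$, impossible since $r\ge 1$ and $i\ge 1$. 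Moreover every cohomological degree occurring in the chain lies in the range $1\le q\le n-r-i\le n-2$, so no degree-$0$ or degree-$n$ contributions interfere either. Hence all the shifts are genuine isomorphisms, and strikingly the conclusion does not depend on the actual values of the twists $t_{j}$.

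Finally, since $\Omega^{n}_{\mathbb{P}^{n}}\cong\mathcal{O}_{\mathbb{P}^{n}}(-n-1)$ is a line bundle, its cohomology is concentrated in degrees $0$ and $n$; as $1\le n-r-i\le n-1$ for $1\le i\le n-1-r$, the group $H^{\,n-r-i}(\Omega^{n}_{\mathbb{P}^{n}}(t_{n}))$ vanishes, which yields $H^{1}(\mathbb{P}^{n}, K_{r+i})=0$. (When $i=n-1-r$ the chain is empty and one computes $H^{1}(\Omega^{n}_{\mathbb{P}^{n}}(t_{n}))=0$ directly, so this is simultaneously the base case.) I expect the only real obstacle to be the index bookkeeping: one must keep the pair (syzygy index $j$, cohomological degree $q$) aligned along the whole climb so that the exceptional diagonal case of Bott's formula is provably never reached; once the indices are pinned down the vanishing of every bordering term, and hence of $H^{1}(K_{r+i})$, is immediate.
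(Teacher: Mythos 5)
Your proof is correct and follows essentially the same route as the paper's: both climb the Koszul filtration $(\ref{eeqq})$ by dimension shifting, use Bott's formula to kill the bordering $\Omega^{p}_{\mathbb{P}^{n}}(t_{p})$-terms (the diagonal case being excluded since $r+i\geq 2$), and terminate at an intermediate cohomology group of the line bundle $\Omega^{n}_{\mathbb{P}^{n}}(t_{n})$, which vanishes. The only cosmetic differences are that you upgrade the paper's chain of inclusions to isomorphisms (killing both bordering terms instead of just the left one) and absorb the paper's separate $n=3$ case as the empty-chain base case.
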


\begin{proof} We consider two cases. First one $n=3$, implies $r=1$ and

$$H^{1}(\mathbb{P}^{3}, K_{2}) = H^{1}(\mathbb{P}^{3}, \Omega^{3}(t_{3})) = 0$$ 

\noindent by Bott's formula.

To the second one where $n>3$, take the following long exact sequences of cohomology associate in $(\ref{eeqq})$ for $i = 1,2,\ldots,n-1-r$ and $j=1,2,\ldots,n-2-r-i.$

$$\begin{array}{llllll}
\cdots \hspace{-1.5mm}&\rightarrow H^{j}(\mathbb{P}^{n}, K_{r+i+j}) \rightarrow H^{j}(\mathbb{P}^{n}, \Omega^{r+i+j}_{\mathbb{P}^{n}}(t_{r+i+j})) \rightarrow H^{j}(\mathbb{P}^{n}, K_{r+i+j-1}) \rightarrow \\
&\rightarrow H^{j+1}(\mathbb{P}^{n}, K_{r+i+j}) \rightarrow H^{j+1}(\mathbb{P}^{n}, \Omega^{r+i+j}_{\mathbb{P}^{n}}(t_{r+i+j})) \rightarrow H^{j+1}(\mathbb{P}^{n}, K_{r+i+j-1}) \rightarrow \cdots
\end{array}$$

Bott's formula implies $H^{j}(\mathbb{P}^{n}, \Omega^{r+i+j}_{\mathbb{P}^{n}}(t_{r+i+j})) = 0$, hence 
\begin{equation}\label{new1}
H^{j}(\mathbb{P}^{n}, K_{r+i+j-1}) \subset H^{j+1}(\mathbb{P}^{n}, K_{r+i+j}). 
\end{equation}

Similarly, we see that
$$\begin{array}{llllll}
\cdots \hspace{-1.5mm}&\rightarrow H^{n-r-1-i}(\mathbb{P}^{n}, \Omega^{n}_{\mathbb{P}^{n}}(t_{n}))  \rightarrow H^{n-r-1-i}(\mathbb{P}^{n}, \Omega^{n-1}_{\mathbb{P}^{n}}(t_{n-1})) \rightarrow H^{n-r-1-i}(\mathbb{P}^{n}, K_{n-2}) \rightarrow \\
&\rightarrow H^{n-r-i}(\mathbb{P}^{n}, \Omega^{n}_{\mathbb{P}^{n}}(t_{n})) \rightarrow H^{n-r-i}(\mathbb{P}^{n}, \Omega^{n-1}_{\mathbb{P}^{n}}(t_{n-1})) \rightarrow H^{n-r-i}(\mathbb{P}^{n}, K_{n-2}) \rightarrow \cdots
\end{array}$$

Bott's formula again implies $H^{n-r-1-i}(\mathbb{P}^{n}, \Omega^{n-1}_{\mathbb{P}^{n}}(t_{n-1})) = 0$, hence 

\begin{equation}\label{new2}
H^{n-r-1-i}(\mathbb{P}^{n}, K_{n-2}) \subset H^{n-r-i}(\mathbb{P}^{n},\Omega_{\mathbb{P}^{n}}^{n}(t_{n}))=0.
\end{equation}

From (\ref{new1}) and (\ref{new2}) we get

$$H^{1}(\mathbb{P}^{n}, K_{r+i}) \subset H^{2}(\mathbb{P}^{n}, K_{r+i+1})  \subset \cdots \subset H^{n-r-1-i}(\mathbb{P}^{n}, K_{n-2}) 
\subset H^{n-r-i}(\mathbb{P}^{n}, \Omega^{n}_{\mathbb{P}^{n}}(t_{n}))=0.$$

%
%
%
\end{proof}

If $m+1 \leq d$, then $h^{0}(\mathbb{P}^{n},\Omega_{\mathbb{P}^{n}}^{r+1}(t_{r+1}))=0$ by Bott's formula, 
and $h^{0}(\mathbb{P}^{n}, K_{r})=0$; this follows from Lemma \ref{lemma3} (for $i=1$) in (\ref{eqqq4}).

%
 
For $jd < m+1 \leq (j+1)d$, with $j=1,\ldots, n-r-2$, we have
$h^{0}(\mathbb{P}^{n}, \Omega_{\mathbb{P}^{n}}^{r+j+1}(t_{r+j+1}))=0$, and 
$h^{0}(\mathbb{P}^{n}, \Omega_{\mathbb{P}^{n}}^{r+s}(t_{r+s}))\neq 0$ when $s=1,\ldots, j$. 
Applying Lemma \ref{lemma3} once more to (\ref{eqqq4}), we obtain.
$$\begin{array}{rlcc}
h^{0}(\mathbb{P}^{n}, K_{r}) = & \displaystyle\sum_{i=1}^{j}(-1)^{i+1}h^{0}(\mathbb{P}^{n}, \Omega_{\mathbb{P}^{n}}^{r+i}(t_{r+i})) \\ \\

=&\displaystyle\sum_{i=1}^{j}(-1)^{i+1}\binom{m+n+1-id}{m+r+i+1-id} \binom{m+r+i-id}{r+i}.

\end{array}$$

On the other hand, when $(n-r-1)d <m+1$, we have $h^{0}(\mathbb{P}^{n}, \Omega_{\mathbb{P}^{n}}^{r+s}(t_{r+s}))\neq 0$ for $s=1,\ldots, n-r$. 
Therefore, we likewise conclude that

$$h^{0}(\mathbb{P}^{n}, K_{r}) = \sum_{i=1}^{n-r}(-1)^{i+1}\binom{m+n+1-id}{m+r+i+1-id} \binom{m+r+i-id}{r+i}.$$

\end{proof}

\section{Proof of Theorem \ref{thm 3}} \label{Proof2}

\begin{proof} We prove it as Theorem \ref{thm 2}. So we consider $\omega \in H^{0}(\mathbb{P}^{n}, \Omega^{1}_{\mathbb{P}^{n}}(m+2))$ a global section that induces the distribution $\mathcal{D}$. Take the Koszul complex associated the section $\omega$, which is exact since the distribution has only isolated singularities
$$0 \rightarrow \bigwedge^{n}(\Omega^{1}_{\mathbb{P}^{n}}(m+2))^{\ast} \rightarrow \cdots  \rightarrow \bigwedge^{2}(\Omega^{1}_{\mathbb{P}^{n}}(m+2))^{\ast} \rightarrow (\Omega^{1}_{\mathbb{P}^{n}}(m+2))^{\ast} \rightarrow I_{Z}\rightarrow 0 ,   $$

\noindent where $Z$ denotes the singular set of $\omega$. We can rewrite
\begin{equation}\label{eq A}
0 \rightarrow \bigwedge^{n}T_{\mathbb{P}^{n}}(-n(m+2)) \rightarrow \cdots  \rightarrow \bigwedge^{2}T_{\mathbb{P}^{n}}(-2(m+2)) 
\rightarrow T_{\mathbb{P}^{n}}(-(m+2)) \rightarrow I_{Z}\rightarrow 0.
\end{equation}

Tensorizing the sequence (\ref{eq A}) by $\mathcal{O}_{\mathbb{P}^{n}}(d+m+1)$ and breaks it into short exact sequences

\begin{equation}\label{eeqq2}
\begin{array}{llll}
0 \rightarrow \bigwedge^{n} T_{\mathbb{P}^{n}}(t_{n}) \rightarrow  \bigwedge^{n-1}T_{\mathbb{P}^{n}}(t_{n-1}) 
\rightarrow K_{n-2} \rightarrow 0 \vspace{0,5mm} \\
\ \ \ \ \ \ \ \ \ \ \ \ \ \ \ \ \ \ \ \ \ \vdots \vspace{1mm} \\ 
\ \ \ 0 \rightarrow K_{r} \rightarrow \bigwedge^{r}T_{\mathbb{P}^{n}}(t_{r}) \rightarrow K_{r-1} \rightarrow 0 \vspace{0,5mm} \\
\ \ \ \ \ \ \ \ \ \ \ \ \ \ \ \ \ \ \ \ \ \vdots \vspace{1mm} \\ 
0 \rightarrow K_{1} \rightarrow T_{\mathbb{P}^{n}}(d-1) \stackrel{i_{\omega}}{\longrightarrow} I_{Z}(d+m+1)\rightarrow 0,
\end{array}
\end{equation}

\bigskip

\noindent where $t_{l} = -l(m+2)+d+m+1$, for $l = 1, \ldots, n$. We are interested in the number $\dim H^{0}(\mathbb{P}^{n}, K_{1}),$ since $X \in H^{0}(\mathbb{P}^{n}, K_{1})$ implies $i_{\omega}X= \omega(X) =0$. 
For this we consider the following sequence of cohomology in (\ref{eeqq2})
\begin{equation}
\begin{array}{llll}\label{eq B}

0 \hspace{-1.5mm}&\rightarrow H^{0}(\mathbb{P}^{n}, K_{2})  \rightarrow H^{0}(\mathbb{P}^{n}, \bigwedge^{2}T_{\mathbb{P}^{n}}(t_2))  \rightarrow H^{0}(\mathbb{P}^{n}, K_{1}) \rightarrow \\

&\rightarrow H^{1}(\mathbb{P}^{n}, K_{2}) \rightarrow H^{1}(\mathbb{P}^{n}, \bigwedge^{2}T_{\mathbb{P}^{n}}(t_2)) 
\rightarrow H^{1}(\mathbb{P}^{n}, K_{1}) \rightarrow \cdots \end{array}\end{equation}

Using the Lemma 3.1 in \cite{SoaCo}, we have $H^{1}(\mathbb{P}^{n}, K_{2}) =0$. To study the vanishing of group 
$H^{0}(\mathbb{P}^{n}, K_{2})$, we associated another long exact sequences of cohomology in (\ref{eeqq2})

$$\begin{array}{llll}
0 \hspace{-1.5mm}&\rightarrow H^{0}(\mathbb{P}^{n}, K_{3})  \rightarrow H^{0}(\mathbb{P}^{n}, \bigwedge^{3}T_{\mathbb{P}^{n}}(t_{3}))  \rightarrow H^{0}(\mathbb{P}^{n}, K_{2}) \rightarrow \\ 

&\rightarrow H^{1}(\mathbb{P}^{n}, K_{3}) \rightarrow H^{1}(\mathbb{P}^{n}, \bigwedge^{3}T_{\mathbb{P}^{n}}(t_{3})) \rightarrow H^{1}(\mathbb{P}^{n}, K_{2}) \rightarrow \cdots  
\end{array}$$
%
$$\vdots$$
%
$$\begin{array}{llllll}
\cdots \hspace{-1.5mm}&\rightarrow H^{n-4}(\mathbb{P}^{n}, \bigwedge^{n}T_{\mathbb{P}^{n}}(t_{n}))  \rightarrow H^{n-4}(\mathbb{P}^{n}, \bigwedge^{n-1}T_{\mathbb{P}^{n}}(t_{n-1}))  \rightarrow H^{n-4}(\mathbb{P}^{n}, K_{n-2}) \rightarrow \\

&\rightarrow H^{n-3}(\mathbb{P}^{n}, \bigwedge^{n}T_{\mathbb{P}^{n}}(t_{n})) \rightarrow H^{n-3}(\mathbb{P}^{n}, \bigwedge^{n-1}T_{\mathbb{P}^{n}}(t_{n-1})) \rightarrow H^{n-3}(\mathbb{P}^{n}, K_{n-2}) \rightarrow \cdots 
\end{array}$$

\noindent The hypothesis $d<2(m+1)$ implies that $H^{0}(\mathbb{P}^{n}, \bigwedge^{3}T_{\mathbb{P}^{n}}(t_{3}))=0$.
Now, the hypothesis $d \neq \left(\frac{n+1}{2}\right)m+1$ implies that 
$$H^{1}(\mathbb{P}^{n}, \bigwedge^{4}T_{\mathbb{P}^{n}}(t_{4}))=H^{2}(\mathbb{P}^{n}, \bigwedge^{5}T_{\mathbb{P}^{n}}(t_{5}))=
\cdots = H^{n-4}(\mathbb{P}^{n}, \bigwedge^{n-1}T_{\mathbb{P}^{n}}(t_{n-1})) = 0.$$

\noindent In fact, the only way to have $H^{l-3}(\mathbb{P}^{n}, \bigwedge^{l}T_{\mathbb{P}^{n}}(t_{l}))\neq0$
is if $t_l=-l(m+2)+d+m+1=-n-1$ and $n-l=s=l-3$, which leads to $d = \left(\frac{n+1}{2}\right)m+1$, a contradiction.

%

Consequently, we obtain the following sequence of inclusions
$$ H^{0}(\mathbb{P}^{n}, K_{2}) \subset H^{1}(\mathbb{P}^{n}, K_{3}) \subset \cdots \subset H^{n-4}(\mathbb{P}^{n}, K_{n-2}) \subset H^{n-3}(\mathbb{P}^{n}, \bigwedge^{n}T_{\mathbb{P}^{n}}(t_{n})) = 0.$$

\noindent  Note that for $n=3$, we have 
$$H^{0}(\mathbb{P}^{3}, K_{2}) = H^{0}(\mathbb{P}^{3}, \bigwedge^{3}T_{\mathbb{P}^{3}}(t_{3})) = 0.$$

With this vanishing and (\ref{eq B}) we get $H^{0}(\mathbb{P}^{n}, K_{1}) \simeq 
H^{0}(\mathbb{P}^{n}, \bigwedge^{2}T_{\mathbb{P}^{n}}(t_{2})).$ To conclude the proof, we use the Bott formula below

$$\displaystyle \dim H^{0}(\mathbb{P}^{n}, K_{1}) = h^{0}(\mathbb{P}^{n}, 
\bigwedge^{2}T_{\mathbb{P}^{n}}(t_{2}))  = \left\{\begin{array}{cl}
\binom{d-m+n}{d-m+n-2}\binom{d-m+n-3}{n-2} &,\ \ \mbox{if}\ \ m+1 \leq d \\
0  &,\ \ \mbox{if} \ \ m+1>d.
\end{array}\right.$$
\end{proof}

\section{Examples} \label{Exa}

In this section, we show two examples to illustrate the above results.

\begin{example} Let $\mathcal{F}$ be a one-dimensional holomorphic foliation with isolated singularities on $\mathbb{P}^{3}$ 
induced by the homogeneous vector field
$$X=a_0z_1^d\frac{\partial}{\partial z_0}+a_1z_0^d\frac{\partial}{\partial z_1}+
a_2z_3^d\frac{\partial}{\partial z_2}+a_3z_2^d\frac{\partial}{\partial z_3},$$
where $a_i\neq 0$ and $d\geq 1$. Let $\mathcal{G}$ be a holomorphic distribution of codimension one on $\mathbb{P}^{3}$ induced by the $1$-form
$$\omega=FdG-GdF,$$
where $F=a_1z_0^{d+1}-a_0z_1^{d+1}$ and $G=a_3z_2^{d+1}-a_2z_3^{d+1}$. Then we have
$i_X\omega=0$ and 
$$\deg(\mathcal{F})=d< 2d=\deg(\mathcal{G}),$$
\noindent which satisfies the quote in Corollary \ref{thm 1}.
\end{example}

The next example shows that the bound given in Corollary \ref{thm 1} is sharp. 
\bigskip

\begin{example} \label{Ex2}
Let $\mathcal{F}$ be a one-dimensional holomorphic foliation with isolated singularities on $\mathbb{P}^n$ 
of degree $d\geq 1$, induced by the homogeneous vector field $X$. Let $\mathcal{G}$ be a Pfaff system, 
induced by $\omega=i_X{\omega_0} \in H^{0}(\mathbb{P}^{n}, \Omega_{\mathbb{P}^{n}}^k(d+k+1))\setminus \{0\}$, 
where $\omega_0 \in H^{0}(\mathbb{P}^{n}, \Omega_{\mathbb{P}^{n}}^{k+1}(k+2))$. 
Note that the Bott formula imply $h^{0}(\mathbb{P}^{n}, \Omega_{\mathbb{P}^{n}}^{k+1}(k+2))=\binom{n+1}{k+2}$.
Then we have $i_{X}\omega=0$ and 
$$\deg(\mathcal{F})=d=\deg(\mathcal{G}).$$
\end{example}

\section{Some inequalities for flags of foliations and distributions} \label{Ineq}

In this section, we consider flags of specific holomorphic foliations and distributions on $\mathbb{P}^{n}$ and derive inequalities related to their degrees. These inequalities are motivated by the so-called Poincar\'e problem for foliations. In particular, we deduce stability results for the tangent sheaf of some rank two holomorphic foliations. First, we are going to introduce some terminologies that will be necessary.

Let $X\subset\mathbb{P}^{n}$ be a subvariety. We denote by $\mathrm{reg}(X)$ the Castelnuovo-Mumford regularity of $X$.
Let us remember that the {\it Castelnuovo-Mumford regularity} is an invariant of arithmetic nature, a good measure of the complexity of $X$:
it is well-known that $X$ is cut out by hypersurfaces with degree at most its regularity, $\mathrm{reg}(X)$. 
However, when $X$ is arithmetically Cohen-Macaulay, for example, a complete intersection, 
the regularity acquires a more geometric meaning: cut $X$ by as many general hyperplanes as its dimension to obtain a set $S$ of points;
then the regularity of $X$ is the smallest integer $r$ such that for each $p\in S$ there is a hypersurface of degree $r-1$
passing through all the points of $S\setminus\left\{p\right\}$. For example, the regularity of a hypersurface is its degree. 
For more details about Castelnuovo-Mumford regularity, see for instance \cite{CE, Es} and \cite{Eis}. 
Finally, we say $X$ is {\it nonsingular in codimension} $1$ if $\mathrm{codim}\hspace{-0.2mm}\big(\mathrm{Sing}(X),X\big)\geq 2$. 

The space of codimension one holomorphic foliations of degree $k$ on $\mathbb{P}^{n}$ is defined by
$$\textsc{Fol}(\mathbb{P}^{n},k)=\left\{\left[\omega\right]\in\mathbb{P}H^0
\left(\mathbb{P}^{n},\Omega^1_{\mathbb{P}^{n}}(k+2)\right)\,\,|
\,\,\omega\wedge d\omega=0\,\,\,\mbox{and}\,\,\,\mathrm{codim}(\mathrm{Sing}(\omega))\geq2\right\}.$$
In \cite{Jou}, Jouanolou describes the irreducible components of $\textsc{Fol}(\mathbb{P}^{n},k)$, of degrees $k= 0, 1$:
one component when $k=0$, and two components when $k=1$, with $n\geq3$. For foliations of degree $2$ with $n\geq3$, 
Cerveau and Lins Neto \cite{CeL} showed that there are exactly six irreducible components: two logarithmic,
two rational, the exceptional, and the linear pullback. For more details on the space of foliations and its irreducible components, we refer the reader, for example, to \cite{CeL,DN,Jou,Ne,Ne1,VPC} and the references therein. 
Now, let us consider the Poincar\'e problem for flags for some classes of foliations, such as logarithmic, 
and pull-back. 

\begin{theorem}\label{A} Let $\mathcal{G}\in\mathcal{L}(n,p,d_1,\ldots,d_r)$, $r\geq p+1$, be a logarithmic foliation on 
$\mathbb{P}^{n}$, $n\geq p+2$, induced in homogeneous coordinates by a $p$-form
$$\omega=F_1\cdots F_r\cdot\hspace{-1mm}\sum_{\textsc{I}=(i_1<\cdots<i_p)}
\lambda_{\,\textsc{I}}\frac{dF_{i_1}}{F_{i_1}}\wedge\cdots\wedge\frac{dF_{i_p}}{F_{i_p}},$$
for some irreducible homogeneous polynomials $F_i$ of degree $d_i\geq 1$ and $\lambda_{\textsc{\,I}}\neq 0$.  
Set $\left|d\right|=\sum d_i$, $V=V_{i_1,\ldots,i_{p+1}}=V(F_{i_1},\ldots,F_{i_{p+1}})$ and $R=\mathrm{reg}\big(\mathrm{Sing}(V)\big)$.
Let $\mathcal{F}\prec\mathcal{G}$ be a flag of holomorphic foliations on $\mathbb{P}^{n}$. 
\vskip 0.2cm
\begin{enumerate}
	\item \label{zz} If some $\left(F_{i}=0\right)$ is smooth and $\dim(\mathcal{F})=1$, then 
$$\deg(\mathcal{G})\leq\deg(\mathcal{F})+\left|d\right|-d_i-p.$$	
	\item \label{zzz} If some $\left(F_{i}=0\right)$ is a normal crossing hypersurface and $\dim(\mathcal{F})=1$, then
$$\deg(\mathcal{G})\leq\deg(\mathcal{F})+\left|d\right|-d_i+n-p-1.$$		
\item Suppose that $V$ is a complete intersection curve $(n=p+2)$ and $\dim(\mathcal{F})=1$.
\vskip 0.2cm
	\begin{enumerate}
		\item[a)] If $V$ is a smooth and $V\not\subset\mathrm{Sing}(\mathcal{F})$, then 
$$\deg(\mathcal{G})\leq\deg(\mathcal{F})+\left|d\right|-\sum_{k=1}^{p+1} d_{i_k}.$$ 
		\item[b)] If $V$ is reduced with at most ordinary nodes as singularities, and $V\not\subset\mathrm{Sing}(\mathcal{F})$, then
$$\deg(\mathcal{G})\leq\deg(\mathcal{F})+\left|d\right|-\sum_{k=1}^{p+1} d_{i_k}+1.$$
	\end{enumerate} 
	\item Suppose that $V$ is a complete intersection and $\mathrm{codim}(\mathcal{F})=p+1$. 
\vskip 0.2cm
\begin{enumerate}
	\item[a)] Assume $V$ is reduced and 
$\mathrm{dim}\hspace{-0.2mm}\big(\mathrm{Sing}(\mathcal{F})\cap V\big)< n-p-1$. If
$R\leq\sum_{k=1}^{p+1} d_{i_k}-p-2$, then 
$$\deg(\mathcal{G})\leq\deg(\mathcal{F})+\left|d\right|-\sum_{k=1}^{p+1} d_{i_k};$$ 
and if $R>\sum_{k=1}^{p+1} d_{i_k}-p-2$, then
$$\deg(\mathcal{G})\leq \frac{1}{2}\hspace{-0.2mm}\big(\deg(\mathcal{F})+R+1\big)+\left|d\right|-\sum_{k=1}^{p+1} d_{i_k}.$$	
  \item[b)] If $V$ is nonsingular in codimension $1$ and $V\not\subset\mathrm{Sing}(\mathcal{F})$, then 
$$\deg(\mathcal{G})\leq\deg(\mathcal{F})+\left|d\right|-\sum_{k=1}^{p+1} d_{i_k}+1.$$
\end{enumerate}
\end{enumerate}
\end{theorem}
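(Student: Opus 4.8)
The plan is to convert every displayed inequality into an equivalent Poincar\'e-type bound on the degrees of the $F_i$, and then to feed those bounds the invariance coming from the flag. The first step is to pin down $\deg(\mathcal G)$. A single summand of $\omega$ is $\big(\prod_j F_j\big/\prod_k F_{i_k}\big)\,dF_{i_1}\wedge\cdots\wedge dF_{i_p}$, whose coefficients are homogeneous of degree $\big(|d|-\sum_k d_{i_k}\big)+\big(\sum_k d_{i_k}-p\big)=|d|-p$; hence $\omega\in H^{0}(\mathbb P^{n},\Omega^{p}_{\mathbb P^{n}}(|d|))$ and $\deg(\mathcal G)=-p-1+|d|=|d|-p-1$ (consistent with Example~1, where $p=1$, $|d|=2d+2$ give $\deg(\mathcal G)=2d$). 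Substituting this identity makes $|d|$ cancel in every conclusion: cases (1) and (2) become $d_i\le\deg(\mathcal F)+1$ and $d_i\le\deg(\mathcal F)+n$; cases (3a) and (4a) with small $R$ become $\sum_k d_{i_k}\le\deg(\mathcal F)+p+1$; cases (3b) and (4b) become $\sum_k d_{i_k}\le\deg(\mathcal F)+p+2$; and case (4a) with large $R$ becomes $\sum_k d_{i_k}\le\tfrac12(\deg(\mathcal F)+R+1)+p+1$. All have the shape ``the degree of an invariant complete intersection of codimension $c$ is at most $\deg(\mathcal F)+c$, up to a correction measured by its singularities'', with $c=1$ in (1),(2) and $c=p+1$ in (3),(4). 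The geometric input is that a logarithmic foliation leaves each $(F_i=0)$ invariant, and since $\mathcal G$ is integrable the intersection $V=V(F_{i_1},\dots,F_{i_{p+1}})$ of invariant hypersurfaces is also $\mathcal G$-invariant; because $\mathcal F\prec\mathcal G$ puts every leaf of $\mathcal F$ inside a leaf of $\mathcal G$, any $\mathcal G$-invariant set is $\mathcal F$-invariant, so $(F_i=0)$ and $V$ are invariant by $\mathcal F$.

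In the cases with $\dim(\mathcal F)=1$, let $X\in H^{0}(\mathbb P^{n},T_{\mathbb P^{n}}(\deg(\mathcal F)-1))$ generate $\mathcal F$. For a smooth $(F_i=0)$ as in (1), invariance reads $\sum_j a_j\partial_j F_i=gF_i$ with $\deg g=\deg(\mathcal F)-1$, so the coefficient vector $(a_j)$ is a syzygy of the partials $\partial_j F_i$ modulo $F_i$; since smoothness makes the $\partial_j F_i$ a regular sequence, every such syzygy is a combination of the Euler relation (degree $1$) and Koszul relations (degree $d_i-1$), and unless $\deg(\mathcal F)\ge d_i-1$ the field $X$ would be forced to be radial modulo $(F_i)$, i.e. to vanish on $(F_i=0)$, contradicting that $\mathcal F$ is reduced; this gives (1). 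The normal-crossing case (2) is the same computation with the module of logarithmic vector fields along $(F_i=0)$ in place of the Jacobian syzygies; the milder singularities enlarge the generation degree and I expect them to produce the weaker bound $d_i\le\deg(\mathcal F)+n$. For the complete intersection curve of (3) ($n=p+2$, so $\dim V=1=\dim\mathcal F$) one restricts $X$ to $V$: the hypothesis $V\not\subset\mathrm{Sing}(\mathcal F)$ makes $X|_V$ a nonzero section of $T_V\otimes\mathcal O_V(\deg(\mathcal F)-1)$, and adjunction $K_V=\mathcal O_V(\sum_k d_{i_k}-n-1)$ converts its nonnegativity into $\sum_k d_{i_k}\le\deg(\mathcal F)+p+2$; smoothness (3a) sharpens this by one unit through the regular-sequence syzygy count, whereas nodes (3b) cost exactly that unit.

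Case (4), with $\mathrm{codim}(\mathcal F)=p+1=\mathrm{codim}(V)$ and hence $\dim(\mathcal F)=\dim(V)$, I would handle by taking determinants of the tangency map. Tangency gives a sheaf map $T_{\mathcal F}|_V\to T_V$, generically injective and, thanks to $\dim(\mathrm{Sing}(\mathcal F)\cap V)<n-p-1$, nonzero; its determinant is a nonzero section $\sigma$ of $\det T_{\mathcal F}^{\ast}|_V\otimes K_V^{-1}\cong\mathcal O_V\big(\deg(\mathcal F)+p+2-\sum_k d_{i_k}\big)$, using $\det T_{\mathcal F}^{\ast}=\mathcal O(\deg(\mathcal F)-\dim(\mathcal F))$ and the adjunction above. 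When $V$ is nonsingular in codimension one the zero locus of $\sigma$ is an honest effective Weil divisor, so its degree is $\ge0$ and we obtain (4b). To reach the sharp bound (4a) I would show $\sigma$ must vanish along $\mathrm{Sing}(V)$ and then measure that forced vanishing by $R=\mathrm{reg}(\mathrm{Sing}(V))$: if $R\le\sum_k d_{i_k}-p-2$ the conditions imposed at $\mathrm{Sing}(V)$ are absorbed and one gains the extra unit, giving the clean bound $\sum_k d_{i_k}\le\deg(\mathcal F)+p+1$; if $R$ is larger only the quadratic-type balance between $\deg\sigma$ and the conditions at $\mathrm{Sing}(V)$ survives, producing the factor $\tfrac12$.

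The hard part is precisely this regularity accounting in (4a). One must convert ``the tangency section vanishes on $\mathrm{Sing}(V)$'' into a numerical inequality, which calls for a cohomology computation on $V$ — via the Koszul resolution of the complete intersection twisted by the ideal sheaf of $\mathrm{Sing}(V)$ — with the vanishing range governed by $R$, followed by the separation into the two regimes of $R$ from which the $\tfrac12$ emerges. By contrast, once the restriction/determinant-plus-adjunction framework is fixed, the smooth and nodal estimates in (1)–(3) are routine; the only genuine care needed there is the logarithmic-derivation bound giving $+n$ for normal crossings and the single-unit correction for nodes.
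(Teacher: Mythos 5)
Your overall strategy coincides with the paper's: compute $\deg(\mathcal{G})=|d|-p-1$, cancel $|d|$, and reduce each item to a Poincar\'e-type bound for a subvariety invariant by $\mathcal{F}$. However, your invariance step for $V$ contains a genuine error. The blanket claim ``any $\mathcal{G}$-invariant set is $\mathcal{F}$-invariant'' is only valid for sets that are unions of $\mathcal{G}$-leaves (such as the hypersurfaces $(F_i=0)$); since $V$ has codimension $p+1>\mathrm{codim}(\mathcal{G})$, the only sense in which $V$ can be ``$\mathcal{G}$-invariant'' is tangency, $T_qV\subset T_q\mathcal{G}$, and tangency to the big foliation does not force the $\mathcal{F}$-directions to lie along $V$. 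Concretely, on $\mathbb{C}^3$ take $\mathcal{G}=\{dz=0\}$ and $\mathcal{F}$ generated by $\partial/\partial x$: the $y$-axis lies in the leaf $\{z=0\}$, hence is tangent to $\mathcal{G}$, but it is not invariant by $\mathcal{F}$. The conclusion you need is still true, but for a different reason: either note that $V$ is the intersection of the $\mathcal{F}$-invariant hypersurfaces $(F_{i_k}=0)$ and that intersections of invariant subvarieties are invariant, or argue as the paper does, showing first that $V\subset\mathrm{Sing}(\mathcal{G})$ (every summand of $\omega$ has coefficient divisible by one of $F_{i_1},\ldots,F_{i_{p+1}}$) and then using that in a flag the singular set of the larger foliation is invariant by the smaller one, by Yoshizaki's theorem \cite{Yo}.

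The second, more serious, gap is that the inequalities you reduce to are themselves deep results, and your proposal does not prove them. The paper at this point simply invokes the literature: \cite{Es,So} for a smooth invariant hypersurface, \cite{BM} for normal crossings, \cite{So0} for the smooth complete intersection curve, \cite{CC,Es} for the nodal curve, \cite{C,CE} for item (4a) (the Castelnuovo--Mumford regularity bound with the factor $\tfrac{1}{2}$), and \cite{CJ} for (4b). You instead attempt to re-derive them: your syzygy sketch for (1) is essentially Soares' argument and could be completed; but for (2) you only say you ``expect'' logarithmic derivations to give the $+n$; for (3a) your adjunction argument as written yields only the weaker bound $\sum_k d_{i_k}\leq\deg(\mathcal{F})+p+2$, and the one-unit gain from smoothness is asserted rather than proved; and for (4a) --- the only place where $R=\mathrm{reg}\big(\mathrm{Sing}(V)\big)$ and the factor $\tfrac{1}{2}$ enter --- you explicitly defer the entire regularity accounting, which is precisely the main theorem of \cite{CE} (see also \cite{C}) and is a substantial piece of work. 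So what you have is a correct reduction (modulo the invariance fix above) plus a plan; to turn it into a proof you should cite these results, as the paper does, rather than leave their re-derivation as the ``hard part.''
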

\vskip 0.2cm
Note that $V$ is a complete intersection for $p=1$.
\begin{proof}
First note that $\deg(\mathcal{G})=\left|d\right|-p-1$. Let us first prove $(1)$ and $(2)$. 
Since $\left(F_{i}=0\right)$ is invariant by $\mathcal{G}$, it follows that $\left(F_{i}=0\right)$ 
is invariant by $\mathcal{F}$. Then if $\left(F_{i}=0\right)$ is smooth we have $d_i\leq\deg(\mathcal{F})+1$; see \cite{Es,So}, 
and the assertion $(1)$ follows, and if $\left(F_{i}=0\right)$ is a normal crossing hypersurface we have $d_i\leq\deg(\mathcal{F})+n$; 
see \cite{BM}, and $(2)$ is proved. It is easy to see that $V\subset\mathrm{Sing}(\mathcal{G})$: we can write $\omega$ in the form
$$\omega=F_1\cdots F_{i_{p+1}} \cdots F_r\cdot\hspace{-1mm}
\left(\lambda_{\,\textsc{I}}\frac{dF_{i_1}}{F_{i_1}}\wedge\cdots\wedge\frac{dF_{i_p}}{F_{i_p}}
+\omega_1+\cdots+\omega_p\right),$$
where $\omega_k$ is a logarithmic $p$-form which does not contain the term $\frac{dF_{i_k}}{F_{i_k}}$.
Then it follows that $V$ is invariant by $\mathcal{F}$. Now, let us prove $(3)$. Then if $V$ is a smooth we have 
$\sum_{k=1}^{p+1} d_{i_k}\leq\deg(\mathcal{F})+p+1$; see \cite{So0}, and the assertion a) follows, and if $V$ is reduced with at most ordinary nodes as singularities we have $\sum_{k=1}^{p+1} d_{i_k}\leq\deg(\mathcal{F})+p+2$; see \cite{CC,Es}, and b) is proved. 
Finally, let us prove $(4)$. Set $\rho=R+p+2-\sum_{k=1}^{p+1} d_{i_k}$, then if $\rho\leq 0$ we have 
$\sum_{k=1}^{p+1} d_{i_k}\leq \deg(\mathcal{F})+p+1$, and if $\rho>0$ we have $\sum_{k=1}^{p+1} d_{i_k}\leq \deg(\mathcal{F})+p+1+\rho$; see 
\cite{C,CE}, and the assertion a) follows. If $V$ is nonsingular in codimension $1$, then one can take $\rho=1$; see \cite{CJ}, then  
$\sum_{k=1}^{p+1} d_{i_k}\leq \deg(\mathcal{F})+p+2$, and b) is proved. 
\end{proof}

\begin{rmk}
In item $(\ref{zz})$ above, when $n$ is odd and $\mathcal{F}$ only has isolated non-degenerates singularities, 
the bound is attained if and only if $\mathrm{Sing}(\mathcal{F})\subset (F_i=0)$. 
In fact, in \cite{CoMaa}, the authors showed that $\mathrm{Sing}(\mathcal{F})\subset (F_i=0)$ 
if and only if $d_i=\deg(\mathcal{F})+1$. Similarly, in item $(\ref{zzz})$, it follows from \cite{BM} that if the bound is attained, 
then $\mathcal{F}$ is given by a global closed logarithmic $(n-1)$-form with poles along $(F_i=0)$.
In general, since $V$ is invariant under $\mathcal{F}$, we note that the bounds in Theorem $\ref{A}$ are attained when the bounds in the Poincar\'e problem for $\mathcal{F}$ with respect to $V$ are attained.
\end{rmk}

Recall that rational foliations arise as a special case of logarithmic foliations when $r=p+1$. 
A rational foliation $\mathcal{G}\in\mathcal{R}(n,d_0,\ldots,d_k)$, with $1\leq k\leq n-2$, on $\mathbb{P}^{n}$, 
is induced in homogeneous coordinates by the $k$-form 
$$\omega=i_{\vartheta}\hspace{-0.5mm}\left(dF_0\wedge\cdots\wedge dF_k\right),$$
where $F_0,\ldots,F_k$ are irreducible homogeneous polynomials of degrees $d_0,\ldots,d_k$, respectively, and 
$\vartheta$ denotes the radial vector field.

The space of rational foliations $\mathcal{R}(n,d_0,\ldots,d_k)$, for $k>1$, was thoroughly studied in \cite{VPC}, see also \cite{Ne1}.
Similarly, the space of logarithmic foliations $\mathcal{L}(n,p,d_1,\ldots,d_r)$, for $p>1$, was extensively examined in \cite{DN}, 
see also \cite{Ne1}.

\begin{corollary} \label{Aa} With the notations of Theorem $\ref{A}$, if $\mathcal{G}$ is a logarithmic foliation such that some 
$\left(F_{i}=0\right)$ is smooth and 
$\mathrm{codim}(\mathcal{G})=n-2>\left|d\right|-d_i$ (respectively $\geq$), then $\mathcal{G}$ is stable (respectively semistable).
\end{corollary}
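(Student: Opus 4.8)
\emph{Proof plan.} The plan is to reduce the (semi)stability of $T_{\mathcal{G}}$ to the Poincar\'e-type inequality already established in item~(\ref{zz}) of Theorem~\ref{A}. First I would observe that $\mathrm{codim}(\mathcal{G})=n-2$ forces $p=n-2$ in the notation of Theorem~\ref{A}, so that $\dim(\mathcal{G})=2$ and $T_{\mathcal{G}}$ is a reflexive sheaf of rank~$2$. To test stability it therefore suffices to compare $\mu(T_{\mathcal{G}})$ with the slopes of proper subsheaves of rank~$1$; since passing to the saturation can only increase the slope of a subsheaf, I may assume the competing subsheaf is saturated, hence a line bundle $T_{\mathcal{F}}\subset T_{\mathcal{G}}$. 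Such a line bundle defines a one-dimensional holomorphic foliation $\mathcal{F}$ (a line field is automatically integrable) fitting into a flag $\mathcal{F}\prec\mathcal{G}$.

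Next I would carry out the slope bookkeeping using the formula of Subsection~\ref{stab}. For the one-dimensional $\mathcal{F}$ it gives $\mu(T_{\mathcal{F}})=1-\deg(\mathcal{F})$, while for $\mathcal{G}$ it gives $\mu(T_{\mathcal{G}})=1-\tfrac{1}{2}\deg(\mathcal{G})$. Thus the stability condition $\mu(T_{\mathcal{F}})<\mu(T_{\mathcal{G}})$ is equivalent to $\deg(\mathcal{G})<2\deg(\mathcal{F})$, and semistability to the non-strict analogue, for every such $\mathcal{F}$.

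The core step is then to feed the flag $\mathcal{F}\prec\mathcal{G}$ into item~(\ref{zz}) of Theorem~\ref{A}. Since some $(F_i=0)$ is smooth and $\dim(\mathcal{F})=1$, that result yields $\deg(\mathcal{G})\leq\deg(\mathcal{F})+|d|-d_i-p$ with $p=n-2$. The hypothesis $n-2>|d|-d_i$ makes $|d|-d_i-(n-2)$ strictly negative, so $\deg(\mathcal{G})<\deg(\mathcal{F})$; because a one-dimensional foliation always has $\deg(\mathcal{F})\geq 0$, this gives $\deg(\mathcal{G})<\deg(\mathcal{F})\leq 2\deg(\mathcal{F})$, which is exactly the required strict slope inequality, so $\mathcal{G}$ is stable. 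Replacing the strict hypothesis by $n-2\geq|d|-d_i$ produces $\deg(\mathcal{G})\leq\deg(\mathcal{F})\leq 2\deg(\mathcal{F})$ and hence semistability by the same chain.

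The hard part will be the reduction step rather than the arithmetic: one must be careful that every destabilizing rank-one subsheaf genuinely arises, after saturation, as the tangent sheaf of a one-dimensional foliation forming a flag with $\mathcal{G}$, so that the hypotheses of Theorem~\ref{A}(\ref{zz}) (in particular the invariance of the smooth hypersurface $(F_i=0)$ under $\mathcal{F}$) are met. Once this is secured, the degree estimate and the slope comparison are entirely routine, and the argument specializes verbatim to rational foliations, i.e.\ the case $r=p+1$.
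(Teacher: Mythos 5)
Your proposal is correct and follows essentially the same route as the paper's proof: reduce to a saturated rank-one subsheaf of $T_{\mathcal{G}}$ (the paper does this in homogeneous coordinates, writing a tangent vector field as $X=PX_0$ with $X_0$ reduced, which is exactly your saturation step), apply Theorem \ref{A}(1) to the resulting flag $\mathcal{F}\prec\mathcal{G}$ with $p=n-2$, and conclude via the slope inequality $\deg(\mathcal{G})<2\deg(\mathcal{F})$ (resp. $\leq$). The invariance worry in your last paragraph is not an extra burden: it is already handled inside the proof of Theorem \ref{A}, since the flag condition alone forces $(F_i=0)$ to be $\mathcal{F}$-invariant.
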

\begin{proof} In fact, given a global section $X\in H^0(\mathbb{P}^{n},T_{\mathbb{P}^{n}}(\deg(X)-1))$ tangent to $\mathcal{G}$, 
we can write $X=PX_0$ (in homogeneous coordinates) for some homogeneous polynomial $P$ and $X_0$ 
a holomorphic foliation (reduced), then by Theorem $\ref{A}$ $(1)$ we have
$\deg(\mathcal{G})\leq\deg(X_0)+\left|d\right|-d_i-n+2<2\deg(X_0)\leq2\left(\deg(P)+\deg(X_0)\right)=2\deg(X)$, and so 
$\mathcal{G}$ is stable.	
\end{proof}

In the context of pull-back foliations, we now present the following result. 

\begin{theorem} \label{C}
Let $\mathcal{G}=F^{\ast}(\mathcal{G}_0)\in\textsc{PB}(n,m,k,r)$ be a pull-back foliation, where 
$\mathcal{G}_0$ is a one-dimensional holomorphic foliation of degree $k$ on $\mathbb{P}^{r+1}$
and $F\hspace{-1mm}:\mathbb{P}^n\rightarrow\mathbb{P}^{r+1}$, $n\geq r+2$, a rational map of degree $m$. 
\vskip 0.2cm
\begin{enumerate}
	\item \label{xx} Let $\mathcal{F}\prec\mathcal{G}$ be a flag of holomorphic foliations on $\mathbb{P}^{n}$, with $\dim(\mathcal{F})=1$.
\begin{enumerate}
\vskip 0.2cm
	\item[a)] If $\mathcal{G}_0$ leave invariant a hypersurface $\mathcal{H}$ of degree $d$, 
such that $F^{-1}(\mathcal{H})$ is a smooth hypersurface, then 
$$\deg(\mathcal{G})\leq\left(\frac{k+r+1}{d}\right)\hspace{-1mm}
\big(\deg(\mathcal{F})+1\big)-r-1.$$
  \item[b)] If $\mathcal{G}_0$ leave invariant a hypersurface $\mathcal{H}$ of degree $d$, 
such that $F^{-1}(\mathcal{H})$ is a normal crossing hypersurface, then
$$\deg(\mathcal{G})\leq\left(\frac{k+r+1}{d}\right)\hspace{-1mm}
\big(\deg(\mathcal{F})+n\big)-r-1.$$
\end{enumerate}
\vskip 0.2cm
	\item Let $S_0\subset\mathrm{Sing}(\mathcal{G}_0)$ be an irreducible component of codimension $s\geq 2$
which is a complete intersection of hypersurfaces of degrees $d_1,\ldots,d_s$, such that $S=F^{-1}\left(S_0\right)$ 
is naturally a complete intersection. Set $\left|d\right|=\sum d_i$ and $R=\mathrm{reg}\big(\mathrm{Sing}(S)\big)$. 
Let $\mathcal{F}\prec\mathcal{G}$ be a flag of holomorphic foliations on $\mathbb{P}^{n}$, with 
$\mathrm{codim}(\mathcal{F})=s$. 
\vskip 0.2cm
\begin{enumerate}
	\item[a)] Assume $S$ is reduced and $\mathrm{dim}\hspace{-0.2mm}\big(\mathrm{Sing}(\mathcal{F})\cap S\big)< n-s$.
If $R\leq m\left|d\right|-s-1$, then 
$$\deg(\mathcal{G})\leq\left(\frac{k+r+1}{\left|d\right|}\right)\hspace{-1mm}\big(\deg(\mathcal{F})+s\big)-r-1;$$
and if $R>m\left|d\right|-s-1$, then 
$$\deg(\mathcal{G})\leq\left(\frac{k+r+1}{2\hspace{-0.5mm}\left|d\right|}\right)\hspace{-1mm}\big(\deg(\mathcal{F})+R+2s+1\big)-r-1.$$
  \item[b)] If $S$ is nonsingular in codimension $1$ and $S\not\subset\mathrm{Sing}(\mathcal{F})$, then 
$$\deg(\mathcal{G})\leq\left(\frac{k+r+1}{\left|d\right|}\right)\hspace{-1mm}\big(\deg(\mathcal{F})+s+1\big)-r-1.$$
\end{enumerate}
\end{enumerate}
\end{theorem}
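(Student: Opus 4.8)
The plan is to reduce every inequality to the Poincar\'e-type bounds already used in the proof of Theorem \ref{A}, transporting degrees and invariant subvarieties from $\mathbb{P}^{r+1}$ to $\mathbb{P}^{n}$ through the pull-back construction. The essential extra input is the degree formula for the pull-back foliation. Since $\mathcal{G}_0$ is one-dimensional of degree $k$ on $\mathbb{P}^{r+1}$, it has codimension $r$ and is induced by a polynomial $r$-form $\omega_0$ with homogeneous coefficients of degree $k+1$. Composing with the coordinate polynomials of $F$ (each of degree $m$) and using that every pulled-back differential $F^{\ast}(dz_i)=dF_i$ has coefficients of degree $m-1$, one finds that $F^{\ast}\omega_0$ has coefficients of degree $m(k+1)+r(m-1)=m(k+r+1)-r$. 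After removing the common factor this gives
$$\deg(\mathcal{G})=m(k+r+1)-(r+1),$$
which is exactly the quantity appearing on the right-hand side of each inequality; I would also cite the standard description of $\textsc{PB}(n,m,k,r)$ for this formula.

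Next I would set up the transport of invariance. If $\mathcal{H}$ is invariant by $\mathcal{G}_0$, then $F^{-1}(\mathcal{H})$ is invariant by $\mathcal{G}=F^{\ast}(\mathcal{G}_0)$, and since the flag condition $\mathcal{F}\prec\mathcal{G}$ means $T_{\mathcal{F}}\subset T_{\mathcal{G}}$, tangency of $\mathcal{G}$ to $F^{-1}(\mathcal{H})$ forces tangency of $\mathcal{F}$; hence $F^{-1}(\mathcal{H})$ is $\mathcal{F}$-invariant. Writing $\mathcal{H}=(G=0)$ with $\deg G=d$, the hypersurface $F^{-1}(\mathcal{H})=(G\circ F=0)$ has degree $md$. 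For part (\ref{xx}) I would then apply the same Poincar\'e bounds as in Theorem \ref{A}: for $F^{-1}(\mathcal{H})$ smooth with $\dim(\mathcal{F})=1$ one has $md\leq\deg(\mathcal{F})+1$ (see \cite{Es,So}), and for $F^{-1}(\mathcal{H})$ normal crossing one has $md\leq\deg(\mathcal{F})+n$ (see \cite{BM}). Substituting $m(k+r+1)=md\cdot\frac{k+r+1}{d}$ into the degree formula and using $\frac{k+r+1}{d}>0$ yields a) and b) immediately.

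For part (2) the reasoning is identical, now with $\mathrm{codim}(\mathcal{F})=s$ and the subvariety $S=F^{-1}(S_0)$. Since $S_0\subset\mathrm{Sing}(\mathcal{G}_0)$ we get $S\subset\mathrm{Sing}(\mathcal{G})$, and as $\mathrm{Sing}(\mathcal{G})$ is invariant by $\mathcal{F}$ by the flag condition (see Section \ref{Pre}), $S$ is $\mathcal{F}$-invariant; moreover $S=V(F^{\ast}G_1,\ldots,F^{\ast}G_s)$ is a complete intersection whose defining degrees are $md_1,\ldots,md_s$, so their sum is $m\left|d\right|$. I would then invoke the complete-intersection bounds from the proof of Theorem \ref{A}(4), the hypotheses $\dim(\mathrm{Sing}(\mathcal{F})\cap S)<n-s$ and $S\not\subset\mathrm{Sing}(\mathcal{F})$ matching those there since $n-s=n-(p+1)$. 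With $\rho=R+s+1-m\left|d\right|$, the case $\rho\leq 0$ (that is, $R\leq m\left|d\right|-s-1$) gives $m\left|d\right|\leq\deg(\mathcal{F})+s$, while $\rho>0$ gives $2m\left|d\right|\leq\deg(\mathcal{F})+R+2s+1$ (see \cite{C,CE}); when $S$ is nonsingular in codimension $1$ one may take $\rho=1$, so $m\left|d\right|\leq\deg(\mathcal{F})+s+1$ (see \cite{CJ}). Dividing by $\left|d\right|$, multiplying by $k+r+1$, and subtracting $r+1$ through the degree formula produces a) and b).

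The main obstacle I anticipate is the careful justification of the degree bookkeeping for the pull-back: confirming that saturation does not lower the coefficient degree below $m(k+r+1)-r$, so that the degree formula is an equality and not merely an inequality, and verifying that $F^{-1}(\mathcal{H})$ and $F^{-1}(S_0)$ genuinely inherit the smoothness, normal-crossing, complete-intersection and codimension properties required to feed into the Poincar\'e bounds. Once these geometric facts are secured, each inequality becomes a one-line substitution into the degree formula, in exact parallel with Theorem \ref{A}.
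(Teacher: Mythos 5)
Your proposal is correct and follows essentially the same route as the paper's own proof: establish $\deg(\mathcal{G})=m(k+r+1)-r-1$, transport invariance of $F^{-1}(\mathcal{H})$ (resp.\ $S\subset\mathrm{Sing}(\mathcal{G})$) to $\mathcal{F}$ via the flag condition, and feed the degree $md$ (resp.\ $m\left|d\right|$) into the Poincar\'e-type bounds of \cite{Es,So}, \cite{BM}, \cite{C,CE} and \cite{CJ} with the same case division on $\rho=R+s+1-m\left|d\right|$. The only difference is that you sketch a justification of the degree formula and flag invariance, which the paper simply asserts, so your write-up is if anything slightly more complete.
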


\begin{proof}
First note that $\deg(\mathcal{G})=m(k+r+1)-r-1$. Let us prove $(1)$. 
Since $F^{-1}(\mathcal{H})$ is invariant by $\mathcal{G}$, it follows that $F^{-1}(\mathcal{H})$
is invariant by $\mathcal{F}$. Then if $F^{-1}(\mathcal{H})$ is smooth we have $m\cdot d\leq\deg(\mathcal{F})+1$; see \cite{Es,So}, 
and the assertion a) follows, and if $F^{-1}(\mathcal{H})$ is a normal crossing hypersurface we have $m\cdot d\leq\deg(\mathcal{F})+n$; 
see \cite{BM}, and b) is proved. Now, let us prove $(2)$. Since $S\subset\mathrm{Sing}(\mathcal{G})$, it follows that 
$S$ is invariant by $\mathcal{F}$. Set $\rho=R+s+1-m\cdot\left|d\right|$, then if $\rho\leq 0$
we have $m\cdot\left|d\right|\leq \deg(\mathcal{F})+s$, and if $\rho>0$ we have $m\cdot\left|d\right|\leq \deg(\mathcal{F})+s+\rho$; 
see \cite{C,CE}, and the assertion a) follows. If $S$ is nonsingular in codimension $1$, then one can take $\rho=1$; see \cite{CJ}, then  
$m\cdot\left|d\right|\leq \deg(\mathcal{F})+s+1$, and b) is proved. 
\end{proof}

\begin{rmk}
In item $(\ref{xx})-a)$ above, when $n$ is odd and $\mathcal{F}$ only has isolated non-degenerates singularities, 
the bound is attained if and only if $\mathrm{Sing}(\mathcal{F})\subset F^{-1}(\mathcal{H})$. 
In fact, in \cite{CoMaa}, the authors showed that $\mathrm{Sing}(\mathcal{F})\subset F^{-1}(\mathcal{H})$ 
if and only if $m\cdot d=\deg(\mathcal{F})+1$. Similarly, in item $(\ref{xx})-b)$, it follows from \cite{BM} that if the bound is attained, 
then $\mathcal{F}$ is given by a global closed logarithmic $(n-1)$-form with poles along $F^{-1}(\mathcal{H})$.
In general, since $F^{-1}(\mathcal{H})$ (or $S$) is invariant under $\mathcal{F}$, we note that the bounds in Theorem $\ref{C}$ 
are attained when the bounds in the Poincar\'e problem for $\mathcal{F}$ with respect to $F^{-1}(\mathcal{H})$ (or $S$) are attained.
\end{rmk}

The space of pull-back foliations $\textsc{PB}(n,m,k,r)$, for $r>1$ was well studied in \cite{Ne}, see also \cite{Ne1}.

\begin{corollary} Let $\mathcal{G}=F^{\ast}(\mathcal{G}_0)\in\textsc{PB}(n,m,k,1)$ be a pull-back foliation and 
$p\in\mathrm{Sing}(\mathcal{G}_0)$ be an isolated singularity. Set
$S=F^{-1}\left(\left\{p\right\}\right)$ and $R=\mathrm{reg}\big(\mathrm{Sing}(S)\big)$. Let 
$\mathcal{F}\prec\mathcal{G}$ be a flag of holomorphic foliations on $\mathbb{P}^{n}$, with 
$\mathrm{codim}(\mathcal{F})=2$. 
\vskip 0.2cm
\begin{enumerate}
	\item Assume $S$ is reduced and $\mathrm{dim}\hspace{-0.2mm}\big(\mathrm{Sing}(\mathcal{F})\cap S\big)< n-2$.
If $R\leq 2m-3$, then 
$$\deg(\mathcal{G})\leq\left(\frac{k+2}{2}\right)\hspace{-0.5mm}\deg(\mathcal{F})+k;$$
and if $R>2m-3$, then 
$$\deg(\mathcal{G})\leq\left(\frac{k+2}{4}\right)\hspace{-1mm}\big(\deg(\mathcal{F})+R+1\big)+k.$$
  \item If $S$ is nonsingular in codimension $1$ and $S\not\subset\mathrm{Sing}(\mathcal{F})$, then 
$$\deg(\mathcal{G})\leq\left(\frac{k+2}{2}\right)\hspace{-1mm}\big(\deg(\mathcal{F})+1\big)+k.$$
\end{enumerate}
\end{corollary}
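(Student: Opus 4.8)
The plan is to specialize Theorem \ref{C} to the case $r=1$, $s=2$, so that the ambient target is $\mathbb{P}^{r+1}=\mathbb{P}^2$ and $\mathcal{G}_0$ is a one-dimensional holomorphic foliation of degree $k$ on $\mathbb{P}^2$. First I would observe that an isolated singularity $p\in\mathrm{Sing}(\mathcal{G}_0)$ on $\mathbb{P}^2$ is a point, hence a complete intersection of $s=2$ hypersurfaces (two general lines through $p$) of degrees $d_1=d_2=1$; therefore $\left|d\right|=\sum d_i=2$. With these numerical choices the preimage $S=F^{-1}(\{p\})$ is exactly the codimension-two complete intersection appearing in Theorem \ref{C}, and the hypothesis $\mathrm{codim}(\mathcal{F})=s=2$ matches the corollary's standing assumption.

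Next I would simply substitute $r=1$, $s=2$, and $\left|d\right|=2$ into the three inequalities of Theorem \ref{C}, part $(2)$. In case $(a)$, the threshold condition $R\leq m\left|d\right|-s-1$ becomes $R\leq 2m-3$, precisely the corollary's dichotomy. The first bound
$$\deg(\mathcal{G})\leq\left(\frac{k+r+1}{\left|d\right|}\right)\big(\deg(\mathcal{F})+s\big)-r-1$$
becomes $\deg(\mathcal{G})\leq\left(\frac{k+2}{2}\right)\big(\deg(\mathcal{F})+2\big)-2$, which I would expand and simplify: $\left(\frac{k+2}{2}\right)\deg(\mathcal{F})+(k+2)-2=\left(\frac{k+2}{2}\right)\deg(\mathcal{F})+k$, matching the stated bound. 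For the $R>2m-3$ case, the second bound of Theorem \ref{C}$(2)(a)$ becomes $\deg(\mathcal{G})\leq\left(\frac{k+2}{4}\right)\big(\deg(\mathcal{F})+R+5\big)-2$; here I would have to check whether the constant reduces to the claimed $\left(\frac{k+2}{4}\right)\big(\deg(\mathcal{F})+R+1\big)+k$, which requires $\left(\frac{k+2}{4}\right)\cdot 4-2=k$, i.e. $(k+2)-2=k$, so the match is exact. Part $(2)(b)$ specializes analogously: $\deg(\mathcal{G})\leq\left(\frac{k+2}{2}\right)\big(\deg(\mathcal{F})+3\big)-2=\left(\frac{k+2}{2}\right)\big(\deg(\mathcal{F})+1\big)+(k+2)-2=\left(\frac{k+2}{2}\right)\big(\deg(\mathcal{F})+1\big)+k$, again reproducing the stated bound.

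The only genuine content beyond substitution is justifying that the hypotheses of Theorem \ref{C}$(2)$ are met, namely that an isolated singular point of a foliation on $\mathbb{P}^2$ legitimately counts as an irreducible complete-intersection component of $\mathrm{Sing}(\mathcal{G}_0)$ of codimension $s=2$ with $d_1=d_2=1$. This is immediate since a reduced point in $\mathbb{P}^2$ is the transverse intersection of two lines. The main (modest) obstacle I anticipate is purely bookkeeping: confirming that the arithmetic in the $R>2m-3$ branch collapses the $R+2s+1=R+5$ term and the $-r-1=-2$ shift into the clean form $R+1$ plus additive $k$, and verifying there is no off-by-one discrepancy in the regularity threshold when passing from $m\left|d\right|-s-1$ to $2m-3$. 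Once these specializations are checked, the corollary follows directly from Theorem \ref{C}.
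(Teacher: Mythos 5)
Your proposal is correct and is exactly how the paper intends this corollary to be obtained: the paper states it without proof immediately after Theorem \ref{C}, as a direct specialization with $r=1$, $s=2$, $d_1=d_2=1$, $\left|d\right|=2$ (a point of $\mathbb{P}^2$ being the complete intersection of two lines), and your arithmetic reducing the constants $\big(R+5\big)$ and $-r-1=-2$ to the stated forms $\big(R+1\big)$ plus $k$ checks out in every case.
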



\begin{corollary} Let $\mathcal{G}=F^{\ast}(\mathcal{G}_0)$ be a pull-back foliation, where 
$\mathcal{G}_0$ is a one-dimensional holomorphic foliation of degree $k$ on $\mathbb{P}^{n-1}$
and $F\hspace{-1mm}:\mathbb{P}^n\rightarrow\mathbb{P}^{n-1}$ a rational map. 
Suppose that $\mathcal{G}_0$ leave invariant a hypersurface $\mathcal{H}$ of degree $d$, 
such that $F^{-1}(\mathcal{H})$ is a smooth hypersurface. If $\mathrm{codim}(\mathcal{G})=n-2\leq 2d-k-1$, 
then $\mathcal{G}$ is semistable for $n=3$ and stable for $n>3$. 
\end{corollary}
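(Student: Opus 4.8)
The plan is to follow the template of Corollary~\ref{Aa}, replacing the logarithmic input by Theorem~\ref{C}\,(1). First I would record the numerics. Since $\mathcal{G}_0$ is one-dimensional on $\mathbb{P}^{n-1}=\mathbb{P}^{r+1}$, we have $r=n-2$, so $\mathcal{G}$ has $\codim(\mathcal{G})=n-2$ and $\dim(\mathcal{G})=2$; thus $T_{\mathcal{G}}$ is a rank two sheaf with slope $\mu(T_{\mathcal{G}})=\frac{2-\deg(\mathcal{G})}{2}$, and by Theorem~\ref{C} its degree is $\deg(\mathcal{G})=m(k+n-1)-(n-1)$. The standing hypothesis $\codim(\mathcal{G})=n-2\leq 2d-k-1$ rewrites cleanly as $k+n-1\leq 2d$, i.e. $\frac{k+n-1}{d}\leq 2$, which is the numerical fact I will exploit.

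The core step is the reduction to rank one subsheaves. As $T_{\mathcal{G}}$ has rank two, any destabilizing subsheaf has rank one, and it suffices to test saturated ones. A saturated rank one subsheaf of $T_{\mathcal{G}}\subset T_{\mathbb{P}^{n}}$ is a line bundle, hence it is $T_{\mathcal{F}}$ for a one-dimensional holomorphic foliation $\mathcal{F}$ tangent to $\mathcal{G}$; an arbitrary tangent field $X\in H^{0}(\mathbb{P}^{n},T_{\mathbb{P}^{n}}(\deg(X)-1))$ factors as $X=P\,X_0$ with $X_0$ reduced, inducing such an $\mathcal{F}$ with $\deg(\mathcal{F})=\deg(X_0)\leq\deg(X)$ and $\mu(T_{\mathcal{F}})=1-\deg(\mathcal{F})$. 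In every case $\mathcal{F}\prec\mathcal{G}$ is a flag of holomorphic foliations with $\dim(\mathcal{F})=1$, so the hypotheses of Theorem~\ref{C}\,(1)\,a) are in force.

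I would then combine the estimates. Writing $e=\deg(\mathcal{F})\geq 0$, Theorem~\ref{C}\,(1)\,a) together with $\frac{k+n-1}{d}\leq 2$ yields
\[
\deg(\mathcal{G})\leq\Big(\tfrac{k+n-1}{d}\Big)(e+1)-(n-1)\leq 2(e+1)-(n-1)=2e+3-n.
\]
For $n=3$ this gives $\deg(\mathcal{G})\leq 2e$, hence $\mu(T_{\mathcal{F}})=1-e\leq 1-\frac{\deg(\mathcal{G})}{2}=\mu(T_{\mathcal{G}})$, so $\mathcal{G}$ is semistable. For $n>3$ it gives $\deg(\mathcal{G})\leq 2e+3-n<2e$, so $\mu(T_{\mathcal{F}})<\mu(T_{\mathcal{G}})$ strictly, and $\mathcal{G}$ is stable.

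The main obstacle will be the justification of the reduction step rather than the inequality chain: one must be sure that every saturated rank one competitor genuinely comes from a one-dimensional foliation to which Theorem~\ref{C}\,(1)\,a) applies, that passing from an arbitrary tangent field $X$ to its reduced factor $X_0$ only lowers the slope (so testing $\mathcal{F}$ is enough), and that the hypothesis ``$F^{-1}(\mathcal{H})$ smooth'' is exactly what supplies the inequality $m\cdot d\leq\deg(\mathcal{F})+1$ underlying Theorem~\ref{C}\,(1)\,a). Once these are in place, the degree bound above finishes both cases.
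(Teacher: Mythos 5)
Your proposal is correct and coincides in essence with the paper's own proof: both factor an arbitrary tangent field as $X=PX_0$ with $X_0$ reduced, apply Theorem \ref{C}\,(1)\,a) to the resulting flag, and use the hypothesis $n-2\leq 2d-k-1$ (i.e.\ $\frac{k+n-1}{d}\leq 2$) to get $\deg(\mathcal{G})\leq 2\deg(X_0)+3-n$, which yields semistability for $n=3$ and stability for $n>3$. The only difference is presentational: you phrase the conclusion via slopes and make the reduction to saturated rank-one subsheaves explicit, whereas the paper works directly with the equivalent degree criterion from its Subsection on Mumford--Takemoto stability.
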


\begin{proof} In fact, given a global section $X\in H^0(\mathbb{P}^{n},T_{\mathbb{P}^{n}}(\deg(X)-1))$ tangent to $\mathcal{G}$, 
we can write $X=PX_0$ (in homogeneous coordinates) for some homogeneous polynomial $P$ and $X_0$ 
a holomorphic foliation (reduced), then by Theorem $\ref{C}$ $(1)$ we have 
$$\deg(\mathcal{G})\leq\left(\frac{k+n-1}{d}\right)\hspace{-1mm}\big(\deg(X_0)+1\big)-(n-1).$$
Then $\deg(\mathcal{G})\leq 2\left(\deg(X_0)+1\right)-2=2\deg(X_0)\leq2\left(\deg(P)+\deg(X_0)\right)=2\deg(X)$ 
for $n=3$ and so $\mathcal{G}$ is semistable, and 	
$\deg(\mathcal{G})\leq 2\left(\deg(X_0)+1\right)-(n-1)<2\deg(X_0)\leq2\deg(X)$ for $n>3$ and so $\mathcal{G}$ is stable.
\end{proof}

The following theorem generalizes the Theorem $1.2$ in \cite{SoaCo}.

\begin{theorem} \label{D} Let $\mathcal{D}\prec\mathcal{G}$ be a flag of reduced holomorphic distributions on $\mathbb{P}^{n}$. 
Suppose that $\mathcal{G}$ is holomorphically decomposable, i.e., in homogeneous coordinates 
$\mathcal{G}$ is given by 
$$\omega=\omega_1\wedge\dots\wedge\omega_k,$$ 
for some homogeneous $1$-forms $\omega_i$. 
If the tangent sheaf of $\mathcal{D}$ is split and $\mathrm{codim}\big(\mathrm{Sing}(\mathcal{G})\big)\geq n-\mathrm{dim}(\mathcal{D})+1$, then
$$\min\hspace{-0,5mm}\left\{\deg(\omega_i)\right\}\leq\deg(\mathcal{D})+1.$$
In particular, 
\begin{enumerate}
	\item If $\mathcal{G}=\mathcal{F}_1\cap\ldots\cap\mathcal{F}_k$ is a complete intersection, i.e., 
each $\mathcal{F}_i$ is a codimension one foliation induced by $\omega_i$ (and so $\mathcal{G}$ is integrable), then
$$\min\hspace{-0,5mm}\left\{\deg(\mathcal{F}_i)\right\}\leq\deg(\mathcal{D}).$$ 
\item If all the forms $\omega_i$ have the same degree, then 
$$\deg(\mathcal{G})\leq \mathrm{codim}(\mathcal{G})\cdot\deg(\mathcal{D})+\mathrm{codim}(\mathcal{G})-1.$$
\end{enumerate}
\end{theorem}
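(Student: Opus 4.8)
The plan is to unwind the flag condition into pointwise identities, reduce to the codimension-one case, and then run a Chern-class computation on the twisted conormal sheaf of $\mathcal{D}$. Write $T_{\mathcal{D}}=\bigoplus_{i=1}^{\ell}\mathcal{O}_{\mathbb{P}^{n}}(1-d_{i})$ with $\ell=\dim(\mathcal{D})$; the splitting gives generating vector fields $X_{i}\in H^{0}(\mathbb{P}^{n},T_{\mathbb{P}^{n}}(d_{i}-1))$ and $\deg(\mathcal{D})=\sum_{i}d_{i}$. Since $\ker(\omega)=\bigcap_{j}\ker(\omega_{j})$ wherever the $\omega_{j}$ are independent, tangency $i_{X_{i}}\omega=0$ forces $\omega_{j}(X_{i})=0$ identically for every $i,j$: the forms $\omega_{1}\wedge\cdots\widehat{\omega_{j}}\cdots\wedge\omega_{k}$ are independent off $\Sing(\mathcal{G})$, so each global section $\omega_{j}(X_{i})$ vanishes on a dense open set, hence everywhere. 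Thus every $\omega_{j}$ annihilates $T_{\mathcal{D}}$ and descends to a section of $N_{\mathcal{D}}^{\vee}(\deg(\omega_{j})+1)$, where $N_{\mathcal{D}}=T_{\mathbb{P}^{n}}/T_{\mathcal{D}}$ has rank $n-\ell$. Equivalently, $\mathcal{D}$ is a subdistribution of each codimension-one distribution $\mathcal{G}_{j}=\ker(\omega_{j})$, and because $\{\omega_{j}=0\}\subseteq\{\omega=0\}=\Sing(\mathcal{G})$, the hypothesis yields $\codim\Sing(\mathcal{G}_{j})\ge n-\ell+1$ for every $j$.

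Fix any $j$ (in particular one of minimal degree). On $U=\mathbb{P}^{n}\setminus\Sing(\mathcal{D})$ the sheaf $N_{\mathcal{D}}$ is locally free of rank $\rho=n-\ell$, so $\omega_{j}$ is a section of the rank-$\rho$ bundle $N_{\mathcal{D}}^{\vee}(\deg(\omega_{j})+1)|_{U}$. A section of a rank-$\rho$ bundle has zero locus of codimension at most $\rho$ wherever it vanishes; but here its zero locus sits inside $\Sing(\mathcal{G})\cap U$, of codimension $\ge n-\ell+1>\rho$. Hence $\omega_{j}$ is nowhere vanishing on $U$, so it trivializes a sub-line-bundle $\mathcal{O}_{U}\hookrightarrow N_{\mathcal{D}}^{\vee}(\deg(\omega_{j})+1)|_{U}$, and consequently the top Chern class $c_{n-\ell}\big(N_{\mathcal{D}}^{\vee}(\deg(\omega_{j})+1)\big)$ vanishes. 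Here I would use that $\codim\Sing(\mathcal{D})$ is large enough for the restriction $H^{2(n-\ell)}(\mathbb{P}^{n})\to H^{2(n-\ell)}(U)$ to be injective, so that the vanishing, visible on $U$, propagates to $\mathbb{P}^{n}$; this is the point where the reducedness and the codimension of $\Sing(\mathcal{D})$ must be controlled.

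I would then compute the class explicitly. Dualizing $0\to T_{\mathcal{D}}\to T_{\mathbb{P}^{n}}\to N_{\mathcal{D}}\to 0$, twisting, and combining with the Euler sequence gives, with $h=c_{1}(\mathcal{O}_{\mathbb{P}^{n}}(1))$ and $t=\deg(\omega_{j})+1$, the identity $c\big(N_{\mathcal{D}}^{\vee}(t)\big)=\dfrac{(1+(t-1)h)^{n+1}}{(1+th)\prod_{i=1}^{\ell}(1+(t+d_{i}-1)h)}$. Extracting the coefficient of $h^{n-\ell}$ produces a monic polynomial $P(t)$ whose vanishing is exactly the conclusion of the previous step, and the desired inequality $\deg(\omega_{j})\le\deg(\mathcal{D})+1$ amounts to showing every real root of $P$ satisfies $t\le\sum_{i}d_{i}+2$. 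I would prove this by verifying $P(t)>0$ for all $t\ge\sum_{i}d_{i}+2$; a sample computation for $\ell=1$ gives $P(\sum_{i}d_{i}+2)=d(d+1)>0$, which indicates $P$ is manifestly positive at the threshold. This positivity is the main obstacle: it is a genuine symmetric-function inequality in $d_{1},\dots,d_{\ell}$ and $n$, and pinning down the sign of the coefficient of $h^{n-\ell}$ in the rational expression above is the technical heart of the proof. Since the argument applies to each $j$, it bounds every $\deg(\omega_{j})$, and in particular the minimal one.

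Finally, the two stated consequences are bookkeeping based on $\deg(\mathcal{G})=\sum_{j}\deg(\omega_{j})-1$, obtained by comparing $\omega\in H^{0}(\Omega^{k}_{\mathbb{P}^{n}}(\deg(\mathcal{G})+k+1))$ with the wedge of the $\omega_{j}\in H^{0}(\Omega^{1}_{\mathbb{P}^{n}}(\deg(\omega_{j})+1))$. For $(1)$, in the complete intersection case $\deg(\mathcal{F}_{i})=\deg(\omega_{i})-1$, whence $\min_{i}\deg(\mathcal{F}_{i})=\min_{i}\deg(\omega_{i})-1\le\deg(\mathcal{D})$. For $(2)$, if all $\deg(\omega_{j})=\delta$ then $\deg(\mathcal{G})=k\delta-1$ with $k=\codim(\mathcal{G})$, so $\delta\le\deg(\mathcal{D})+1$ gives $\deg(\mathcal{G})\le\codim(\mathcal{G})\cdot\deg(\mathcal{D})+\codim(\mathcal{G})-1$.
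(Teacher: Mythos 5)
Your reduction steps are sound: the flag condition does force $\omega_{j}(X_{i})\equiv 0$ for all $i,j$; each $\omega_{j}$ then gives a section of $N_{\mathcal{D}}^{\vee}(\deg(\omega_{j})+1)$ that is nowhere zero on $U=\mathbb{P}^{n}\setminus\mathrm{Sing}(\mathcal{D})$, because its zero locus lies in $\mathrm{Sing}(\mathcal{G})$, whose codimension exceeds $\mathrm{rank}\,N_{\mathcal{D}}^{\vee}=n-\ell$; and the final bookkeeping for items (1) and (2) is correct. But the core of your argument has two genuine gaps, both of which you flag and neither of which you close. First, to propagate the vanishing of $c_{n-\ell}\bigl(N_{\mathcal{D}}^{\vee}(t)|_{U}\bigr)$ from $U$ to $\mathbb{P}^{n}$ you need $H^{2(n-\ell)}(\mathbb{P}^{n})\to H^{2(n-\ell)}(U)$ to be injective, which requires $\mathrm{codim}\,\mathrm{Sing}(\mathcal{D})\geq n-\ell+1$. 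That is \emph{not} a hypothesis of the theorem: ``reduced'' only gives $\mathrm{codim}\,\mathrm{Sing}(\mathcal{D})\geq 2$, and since necessarily $\ell\leq n-2$ your argument needs codimension at least $3$; a split $\mathcal{D}$ can perfectly well have a singular component of codimension $2$. (The same missing hypothesis is also what would let you ignore the $\mathcal{E}xt^{1}(N_{\mathcal{D}},\mathcal{O}_{\mathbb{P}^{n}})$ term, supported on $\mathrm{Sing}(\mathcal{D})$, that appears when you dualize the normal sequence to compute $c\bigl(N_{\mathcal{D}}^{\vee}(t)\bigr)$.) Second, the root-location claim --- that the coefficient $P(t)$ of $h^{n-\ell}$ in your rational expression is positive for all $t\geq\sum_{i}d_{i}+2$ --- is, as you admit, the technical heart, and you only verify one value in the case $\ell=1$. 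Note also that your argument, if completed, would bound \emph{every} $\deg(\omega_{j})$, not just the minimum; this is strictly stronger than the theorem, which should make you suspicious that the positivity claim can hold in the stated generality.

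The paper's proof avoids all of this machinery and uses the codimension hypothesis on $\mathrm{Sing}(\mathcal{G})$ (not on $\mathrm{Sing}(\mathcal{D})$) exactly once, through division. Writing $T_{\mathcal{D}}=\bigoplus T_{\mathcal{F}_{X_{i}}}$, the distribution $\mathcal{D}$ is induced by the homogeneous form $\eta=i_{X_{1}}\cdots i_{X_{\ell}}i_{\vartheta}dV$, whose coefficient degree is $\deg(\mathcal{D})+1$. The flag condition gives $i_{X_{i}}\omega=i_{\vartheta}\omega=0$, hence $\eta\wedge\omega=0$, and Saito's generalization of the de Rham division lemma (this is where $\mathrm{codim}\,\mathrm{Sing}(\mathcal{G})\geq n-\dim(\mathcal{D})+1$ enters) yields homogeneous forms $\eta_{i}$ with $\eta=\sum_{i}\omega_{i}\wedge\eta_{i}$. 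Choosing $i_{0}$ with $\omega_{i_{0}}\wedge\eta_{i_{0}}\neq 0$ and comparing degrees gives $\deg(\omega_{i_{0}})\leq\deg(\eta)=\deg(\mathcal{D})+1$, which is precisely the claim for the minimum, with no characteristic-class computation and no extra assumption on $\mathrm{Sing}(\mathcal{D})$. To salvage your route you would have to add the hypothesis on $\mathrm{codim}\,\mathrm{Sing}(\mathcal{D})$ and prove the symmetric-function inequality; the division-lemma argument is both shorter and strictly more general.
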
 

\begin{proof} We follow the idea in \cite{SoaCo}. Let $dV=dz_0\wedge\cdots\wedge dz_n$ and $X_1,\ldots,X_{n-d}$ be homogeneous vector fields such that $T_{\mathcal{D}}=\bigoplus T_{\mathcal{F}_{X_i}}$, where $d=\mathrm{codim}(\mathcal{D})$. 
It is well known that $\mathcal{D}$ is induced by the $d$-form 
$i_{X_1}\cdots i_{X_{n-d}}i_{\vartheta}dV$, where $\vartheta$ is the radial vector field, in particular 
$\deg(\mathcal{D})=\sum_{i=1}^{n-d} \deg(X_i)$.

Suppose $\mathcal{G}$ is given by $\omega\in H^{0}(\mathbb{P}^{n}, \Omega_{\mathbb{P}^{n}}^{k}(\deg(\mathcal{G})+k+1))$.
Since $i_{X_1}\omega=\cdots=i_{X_{n-d}}\omega=i_{\vartheta}\omega=0$, we have 
$$\left(i_{X_1}\cdots i_{X_{n-d}}i_{\vartheta}dV\right)\wedge\omega=0.$$
Now, $\left(i_{X_1}\cdots i_{X_{n-d}}i_{\vartheta}dV\right)\wedge\left(\omega_1\wedge\cdots\wedge\omega_k\right)=0$ and 
$\mathrm{codim}\big(\mathrm{Sing}(\mathcal{G})\big)\geq n-d+1$ 
allows us to use Saito's generalization of the de Rham division lemma \cite{Mal, Sa} and conclude that there exists 
homogeneous polynomials 
$(d-1)$-forms $\eta_i$ on $\mathbb{C}^{n+1}$ such that
$$i_{X_1}\cdots i_{X_{n-d}}i_{\vartheta}dV=\sum_{i=1}^k\omega_i\wedge \eta_i.$$
Since $\omega_{i_0}\wedge \eta_{i_0}\neq 0$ for some $i_0$, we have
$$\deg(\omega_{i_0})\leq\deg(\omega_{i_0})+\deg(\eta_{i_0})=\deg(\omega_{i_0}\wedge\eta_{i_0})
=\deg\left(i_{X_1}\cdots i_{X_{n-d}}i_{\vartheta}dV\right)=\deg(\mathcal{D})+1.$$
Then, $\min\hspace{-0,5mm}\left\{\deg(\omega_i)\right\}\leq\deg(\mathcal{D})+1$.
\end{proof}

Decomposable forms and complete intersection of foliations were well studied in \cite{CerLN}.

\bigskip

\end{document}